\newtheorem{Th}{Theorem}
\newtheorem{Prop}[Th]{Proposition}
\newtheorem{Lm}[Th]{Lemma}
\newtheorem{Co}[Th]{Corollary}
\theoremstyle{definition}
\newtheorem{Def}[Th]{Definition}
\newtheorem{Rem}{Remark}
\date{}
\author{N.I. Nessonov }
\begin{document}
\maketitle

\begin{abstract}
Let  $\mathbb{N}$ be a set of the natural numbers.
 Symmetric inverse semigroup $R_\infty$ is the semigroup of all infinite 0-1 matrices $\left[ g_{ij}\right]_{i,j\in \mathbb{N}}$ with  at most one 1 in each  row and each column such that $g_{ii}=1$ on the complement of a finite set. The binary operation in  $R_\infty$ is the ordinary matrix multiplication.  It is clear that infinite symmetric group $\mathfrak{S}_\infty$ is a subgroup of  $R_\infty$. The map $\star:\left[ g_{ij}\right]\mapsto\left[ g_{ji}\right]$ is an involution on $R_\infty$. We call a function $f$ on  $R_\infty$ positive definite if for all $r_1, r_2, \ldots, r_n\in R_\infty$  the matrix $\left[ f\left( r_ir_j^\star\right)\right]$ is Hermitian and non-negatively definite. A function $f$ said to be  indecomposable if the corresponding $\star$-representation $\pi_f$ is  a factor-representation. A class of the $R_\infty$-central functions (characters) is defined by the  condition  $f(rs)=f(sr)$ for all $r,s\in R_\infty$. In this paper we classify all factor-representations of $R_\infty$ that correspond to the $R_\infty$-central positive definite functions.
\end{abstract}
\section{Introduction}
Let $R_n$ be the set of all $n\times n$ matrices that contain at most one entry of one in each column
and row and zeroes elsewhere. Under matrix multiplication, $R_n$  has the structure of a
semigroup, a set with an associative binary operation and an identity element. The number of ${\rm rank}\,k$ matrices in $R_n$ is ${{\displaystyle{{{n}\choose{k}}}^2}}k!$ and hence $R_n$ has a total of $\sum\limits_{k=0}^n{{\displaystyle{{{n}\choose{k}}}^2}}k!$ elements. Note that the
set of ${\rm rank}\,n$ matrices in the semigroup $R_n$ is isomorphic to $\mathfrak{S}_n$, the symmetric group on $n$
letters.

The semigroup  $R_\infty$ is the inductive limit of the chain $R_n$, $ n =
1, 2, . . .$ , with the natural embeddings: $ R_n\ni r=\left[ r_{ij} \right]\mapsto \hat{r}=\left[ \hat{r}_{ij} \right]\in R_{n+1}$, where $r_{ij}=\hat{r}_{ij}$ for all $i,j\leq n$ and $\hat{r}_{n+1\,n+1}=1$. Respectively, the group $\mathfrak{S}_\infty\subset R_\infty$ is the inductive limit of the chain $\mathfrak{S}_n$, $n=1,2,\ldots$.
 For convenience we will use the matrix representation of the elements of $R_\infty$. Namely, if $r=\left[ r_{ij} \right]\in R_\infty$ then the matrix $\left[ r_{ij} \right]$ contains at most one entry of one in each column
and row and $r_{nn}=1$ for all sufficiently large $n$.
 Denote by $D_\infty\subset R_\infty$ the abelian subsemigroup of the diagonal matrices.
For any subset $\mathbb{A}\subset\mathbb{N}$ denote by $\epsilon_\mathbb{A}$ the matrix $\left[ \epsilon_{ij} \right]\in D_\infty$ such that $\epsilon_{ii} =\left\{
\begin{array}{rl}
0, &\text{ if } i\in \mathbb{A}\\
1, &\text{ if } i\notin \mathbb{A}.
\end{array}\right.$
For example, $\epsilon_{\{2\}}=
\left[\begin{matrix}
1&0&0&\cdots&\\
0&0&0&\cdots\\
0&0&1&\cdots\\
\cdots&\cdots&\cdots&\cdots
\end{matrix}\right]$.

The ordinary transposition of matrices define an involution on $R_\infty:\left[ r_{ij} \right]^{\star}=\left[ r_{ji} \right]$.

Let $\mathcal{B}(\mathcal{H})$ be the algebra of all bounded operators in a Hilbert space $\mathcal{H}$.
By  a $\star$-representation  of $R_\infty$ we mean a homomorphism $\pi$ of $R_\infty$ into the multiplicative semigroup of the algebra $\mathcal{B}(\mathcal{H})$ such that $\pi(r^*)=\left( \pi(r) \right)^*$, where $\left( \pi(r) \right)^*$ is the Hermitian adjoint of operator $\pi(r) $. It follows immediately that $\pi(s)$ is an unitary operator, when $s\in\mathfrak{S}_\infty$, and $\pi(d)$ is self-adjoint projection for $d\in D_\infty$.

 Recall the notion of the quasiequivalent representation.
 \begin{Def}
 Let $\mathcal{N}_1$ and $\mathcal{N}_2$ be the $w^*$-algebras generated by the operators of the representations $\pi_1$ and $\pi_2$, respectively,  of group or  semigroup $G$. $\pi_1 $ and $\pi_2$ are quasiequivalent if there exists isomorphism $\theta:\mathcal{N}_1\to\mathcal{N}_2$ such that $\theta\left(\pi_1(g)\right)=\pi_2(g)$ for all $g\in G$.
 \end{Def}
\begin{Def}\label{support_of_el_semigroup}
Given an element $r=[r_{mn}]\in R_\infty$, let ${\rm supp}\,r$ be the complement of a set $\left\{n\in \mathbb{N}:r_{nn}=1\right\}$.
\end{Def}
 By definition of $R_\infty$, ${\rm supp}\,r$ is a finite set.
Let $c=\left( n_1\;n_2,\;\cdots\;n_k\right)$ be the cycle from $\mathfrak{S}_\infty$. If $\mathbb{A}\subseteq{\rm supp}\,c$, then $q=c\cdot \epsilon_{\mathbb{A}}$ we call as a {\it quasicycle}. Notice, that $\epsilon_{\{k\}}$ is a quasicycle too. Two quasicycles $q_1$ and $q_2$ are called {\it independent}, if $({\rm supp}\,q_1)\cap({\rm supp}\,q_2)=\emptyset$.    Each $r\in R_\infty$ can be decomposed  in the product of the independent  quasicycles\label{quasicycle}:
\begin{eqnarray}\label{decomposition_into_product}
r=q_1\cdot q_2\cdots q_k, \text{ where } {\rm supp}\, q_i\,\cap \,{\rm supp}\, q_j=\emptyset \text{ for all } i\neq j.
\end{eqnarray}
 In general, this decomposition is not unique.

 In this paper we study the $\star$-representations of the $R_\infty$. The main results are the construction of the  list of ${\rm II}_1$-factor representations and the proof of its fullness.

 Finite semigroup $R_n$, its semigroup algebra and the corresponding representations theory was investigated by various authors in \cite{Munn_1, Munn_2, Solomon_1}. The  irreducible representations  of  $R_n$ are indexed by the set of all Young diagrams
with at most $n$ cells. An analog of the Specht modules for finite symmetric  semigroup was built by C. Grood in \cite{Grood}.

 The main motivation of this paper is  due to A. M. Vershik and P. P. Nikitin.  Using the branching rule for the representations of the semigroups $R_n$, they found the full list of the characters on  $R_\infty$ \cite{VN}. We give  very short and simple  proof of the main theorem 2.12 from \cite{VN} in this note. Our elementary approach is based  on the study of the limiting operators, proposed by A. Okounkov \cite{Ok1, Ok2}. We construct the full collection of the new realisations of the corresponding  ${\rm II}_1$-factor-representations in section \ref{Realisation}.
 \subsection{The examples of $\star$-representations of $R_\infty$.} Given $r\in R_\infty$ define in the space $l^2(\mathbb{N})$ the map
 \begin{eqnarray}
 l^2(\mathbb{N})\ni(c_1,c_2,\ldots,c_n,\ldots)\stackrel{\mathfrak{N}(r)}{\mapsto} (c_1,c_2,\ldots,c_n,\ldots)r\in  l^2(\mathbb{N}).
 \end{eqnarray}
It is easy to check that the next statement holds.
\begin{Prop}
  The operators $\mathfrak{N}(r)$ generate the {\it irreducible} $\star$-representation of $R_\infty$.
\end{Prop}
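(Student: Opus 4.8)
The plan is to verify the two defining properties of a $\star$-representation and then to establish irreducibility by showing that the operators $\mathfrak{N}(r)$ already generate the whole algebra $\mathcal{B}(l^2(\mathbb{N}))$, so that the commutant reduces to the scalars.

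First I would record the elementary description of the operators on the standard orthonormal basis $\{e_n\}_{n\in\mathbb{N}}$ of $l^2(\mathbb{N})$: the operator $\mathfrak{N}(r)$ sends each $e_i$ either to a basis vector $e_j$ (precisely when $r_{ij}=1$) or to $0$ (when the $i$-th row of $r$ vanishes). Compatibility with the $\star$-semigroup operation is then immediate from the associativity of matrix multiplication, while the relation $\mathfrak{N}(r)^{*}=\mathfrak{N}(r^\star)$ follows from the identity $\langle (c)r,\,d\rangle=\langle c,\,(d)r^\star\rangle$, valid because $r$ is a real $0$-$1$ matrix and $r^\star=r^{T}$. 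Thus $\mathfrak{N}$ is a $\star$-representation, and it remains to treat irreducibility.

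For irreducibility I would exhibit, inside the von Neumann algebra generated by the $\mathfrak{N}(r)$, every matrix unit $E_{kl}\colon e_l\mapsto e_k$, $e_j\mapsto 0$ for $j\ne l$. Two families of generators suffice. The diagonal idempotent $\epsilon_{\{k\}}\in D_\infty$ is sent to $I-P_k$, where $P_k$ is the rank-one projection onto $\mathbb{C}e_k$; since $\mathfrak{N}$ of the identity matrix is $I$, each $P_k=I-\mathfrak{N}(\epsilon_{\{k\}})$ lies in the generated algebra. On the other hand every $s\in\mathfrak{S}_\infty$ yields a basis permutation $\mathfrak{N}(s)$, which is unitary. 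Given $k\ne l$, take the transposition $s$ exchanging $k$ and $l$, so that $\mathfrak{N}(s)e_l=e_k$; then $P_k\,\mathfrak{N}(s)\,P_l=E_{kl}$ (and $E_{kk}=P_k$). Hence all matrix units belong to the generated algebra.

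Finally I would conclude in the standard way: the matrix units $E_{kl}$ span the finite-rank operators, whose weak-operator closure is all of $\mathcal{B}(l^2(\mathbb{N}))$; equivalently, any operator commuting with every $E_{kl}$ must be a scalar. Therefore the commutant of the representation is $\mathbb{C}I$ and $\mathfrak{N}$ is irreducible. I do not expect a genuine obstacle here, in line with the paper's remark that the statement is easy to check: the only real content is the observation that the partial permutations $\epsilon_{\{k\}}$ (providing the rank-one projections $P_k$) together with the honest permutations from $\mathfrak{S}_\infty$ already produce all matrix units. The single point demanding a little care is tracking the correct order of composition under right multiplication when checking the representation axioms.
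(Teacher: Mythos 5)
Your proof is correct; the paper offers no argument for this proposition (it is stated as ``easy to check''), and your route --- extracting the rank-one projections $P_k=I-\mathfrak{N}\left(\epsilon_{\{k\}}\right)$ and composing with transpositions to obtain every matrix unit $E_{kl}=P_k\,\mathfrak{N}\!\left((k\;l)\right)P_l$, hence trivial commutant --- is exactly the natural one. The only point worth making explicit is the one you flag: right multiplication gives $\mathfrak{N}(r_1)\mathfrak{N}(r_2)=\mathfrak{N}(r_2r_1)$, i.e.\ an anti-homomorphism, which is repaired by precomposing with the involution $\star$; this changes neither the set of operators nor the irreducibility argument.
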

The next important representation  is called  the {\it left regular representation } of $R_\infty$ \cite{APat}. The formula for the action of the corresponding  operators in the space $l^2(R_\infty)$ is given by
\begin{eqnarray}\label{Left_Reg_Repr}
\mathfrak{L}^{reg}(r)\left(\sum\limits_{t\in R_\infty} c_t\cdot t\right)=\sum\limits_{t\in R_\infty:(tt^*)(r^*r)=tt^*} c_t\cdot r\cdot t,\;\;c_t\in\mathbb{C}.
\end{eqnarray}
The operators of the {\it right regular representation } of $R_\infty$ act by
\begin{eqnarray}\label{Right_Reg_Repr}
\mathfrak{R}^{reg}(r)\left(\sum\limits_{t\in R_\infty} c_t\cdot t\right)=\sum\limits_{t\in R_\infty:(t^*t)(r^*r)=t^*t} c_t\cdot t\cdot r^*,\;\;c_t\in\mathbb{C}.
\end{eqnarray}
It is obvious that $\mathfrak{L}^{reg}$ and $\mathfrak{R}^{reg}$ are $\star$-representations of $R_\infty$.

Denote by $l^2_k$ the subspace of $l^2(R_\infty)$ generated by the elements of the view $\sum\limits_{t\in R_\infty:{\rm rank}\,(I-tt^*)=k }c_t\cdot t$. By definition, the subspaces $l^2_k$ are pairwise orthogonal.
It follows from (\ref{Left_Reg_Repr}) and (\ref{Right_Reg_Repr})  that $\mathfrak{L}^{reg}(r)l^2_k\subseteqq l^2_k$ and $\mathfrak{R}^{reg}(r)l^2_k\subseteqq l^2_k$ for all $r\in R_\infty$. Denote by $\mathfrak{L}^{reg}_k$ $\left(\mathfrak{R}^{reg}_k  \right)$ the restriction of $\mathfrak{L}^{reg}$ $\left( \mathfrak{R}^{reg} \right)$ to $l^2_k$.

Since $\mathfrak{R}^{reg}(r_1)\cdot \mathfrak{L}^{reg}(r_2)=\mathfrak{L}^{reg}(r_2)\cdot \mathfrak{R}^{reg}(r_1)$ for all $r_1,r_2\in R_\infty$, the operators $T_k^{(2)}(r_1,r_2)=\mathfrak{L}^{reg}_k(r_1)\cdot\mathfrak{R}^{reg}_k(r_2)$ $\left( r_1,r_2\in R_\infty \right)$ define $\star$-representation of the semigroup $R_\infty\times R_\infty$.
\begin{Prop}
The next properties hold:
\begin{itemize}
  \item {\bf a}) the representation $T_k^{(2)}$ is irreducible for each $k=0,1,\ldots,m,\ldots$;
  \item {\bf b}) the representation $\mathfrak{L}^{reg}_0$ $\left( \mathfrak{R}^{reg}_0\right)$ is ${\rm II}_1$-factor-representation of $R_\infty$; in particular, $\mathfrak{L}^{reg}_0\left( \epsilon_{\{j\}} \right)=0$ $\left( \mathfrak{R}^{reg}_0\left( \epsilon_{\{j\}} \right)=0 \right)$;
  \item {\bf c}) for each $k\geq1$ the representation $\mathfrak{L}^{reg}_k$ $\left( \mathfrak{R}^{reg}_k\right)$ is ${\rm II}_\infty$-factor-representation of $R_\infty$; in particular, if $l>k$ then $\mathfrak{L}^{reg}_k\left( \epsilon_{\{j_1\}}\cdots\epsilon_{\{j_l\}}  \right)=0$ $\left(\mathfrak{R}^{reg}_k\left( \epsilon_{\{j_1\}}\cdots\epsilon_{\{j_l\}}  \right)=0\right)$;
        \item {\bf d})  for each $k\geq 0$ the restriction of $\mathfrak{L}^{reg}_k$ $\left( \mathfrak{R}^{reg}_k\right)$ to the subgroup $\mathfrak{S}_\infty\subset R_\infty$ is ${\rm II}_1$-factor-representation quasiequivalent to the regular one.
\end{itemize}
\end{Prop}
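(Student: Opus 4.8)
The plan is to reduce all four parts to the $\mathfrak{S}_\infty$-module structure of $l^2_k$, together with two bookkeeping computations read off directly from (\ref{Left_Reg_Repr}). Recall that for $t\in R_\infty$ the diagonal idempotents $tt^\star$ and $t^\star t$ are the projections onto the range and the domain of $t$ seen as a partial bijection, so $\mathrm{rank}(I-tt^\star)=\mathrm{rank}(I-t^\star t)=k$ is exactly the corank (defect) of $t$; in particular the corank-$0$ elements are the genuine bijections, i.e. the elements of $\mathfrak{S}_\infty$, giving a canonical identification $l^2_0=l^2(\mathfrak{S}_\infty)$. The first thing I would record are the vanishing statements. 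From (\ref{Left_Reg_Repr}), for $t$ of corank $k$ the condition $(tt^\star)(r^\star r)=tt^\star$ holds iff $\mathrm{ran}\,t\subseteq\mathrm{dom}\,r$, i.e. iff the $k$-element set $\mathbb{N}\setminus\mathrm{ran}\,t$ contains $\mathbb{N}\setminus\mathrm{dom}\,r$. Taking $r=\epsilon_{\{j_1\}}\cdots\epsilon_{\{j_l\}}=\epsilon_{\{j_1,\dots,j_l\}}$, of corank $l$, makes this unsatisfiable once $l>k$, so $\mathfrak{L}^{reg}_k(\epsilon_{\{j_1\}}\cdots\epsilon_{\{j_l\}})=0$; the case $k=0,\,l=1$ gives $\mathfrak{L}^{reg}_0(\epsilon_{\{j\}})=0$. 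The $\mathfrak{R}^{reg}_k$ versions are identical after applying the involution $\star$.

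For part (b): since $\mathfrak{L}^{reg}_0(r)=0$ whenever $r\notin\mathfrak{S}_\infty$ (the displayed condition forces $r^\star r=I$), while $\mathfrak{L}^{reg}_0(s)t=st$ for $s\in\mathfrak{S}_\infty$, the algebra $\mathfrak{L}^{reg}_0(R_\infty)''$ is precisely the left group von Neumann algebra of $\mathfrak{S}_\infty$ on $l^2(\mathfrak{S}_\infty)$. As $\mathfrak{S}_\infty$ is an ICC group this is a $\mathrm{II}_1$ factor (indeed the hyperfinite one), proving (b), and symmetrically for $\mathfrak{R}^{reg}_0$.

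The heart of (d) is a clean module computation. For $s\in\mathfrak{S}_\infty$ one has $s^\star s=I$, so $\mathfrak{L}^{reg}_k(s)$ acts simply by $t\mapsto st$, a genuine permutation action of $\mathfrak{S}_\infty$ on the set $X_k$ of corank-$k$ partial bijections that preserves the domain of $t$. I would split $X_k$ into the orbits $\mathcal{O}_B=\{t:\mathbb{N}\setminus\mathrm{dom}\,t=B\}$ indexed by $k$-subsets $B$; the stabiliser of a point $t_0$ consists of the permutations fixing $\mathrm{ran}\,t_0$ pointwise, i.e. the finite group $\mathfrak{S}_A$ of permutations of its size-$k$ range-complement $A$, so $l^2(\mathcal{O}_B)\cong\mathrm{Ind}_{\mathfrak{S}_k}^{\mathfrak{S}_\infty}\mathbf{1}=l^2(\mathfrak{S}_\infty/\mathfrak{S}_k)$. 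Hence $\mathfrak{L}^{reg}_k|_{\mathfrak{S}_\infty}$ is a countable direct sum of copies of this quasiregular representation. Now $l^2(\mathfrak{S}_\infty/\mathfrak{S}_k)$ is exactly the reduction of the regular representation of $\mathfrak{S}_\infty$ by the nonzero projection $p=\tfrac1{k!}\sum_{h\in\mathfrak{S}_k}\mathfrak{R}^{reg}(h)$, which lies in the commutant of the left regular algebra. Since that algebra is a factor, the central support of $p$ in its commutant is $1$, the compression $m\mapsto mp$ is injective, and the reduction is quasiequivalent to the regular representation. Because quasiequivalence ignores multiplicity, the countable direct sum is again quasiequivalent to the regular representation of $\mathfrak{S}_\infty$; in particular $\mathfrak{L}^{reg}_k(\mathfrak{S}_\infty)''$ is a $\mathrm{II}_1$ factor. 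This is (d), and the $\mathfrak{R}^{reg}_k$ case is symmetric.

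It remains to treat (a) and (c), which are genuinely about the semigroup and where the main difficulty lies. For (a) I would pass to the action of $R_\infty\times R_\infty$ on $X_k$ by $t\mapsto r_1tr_2^\star$ (on the appropriate subspace); already $\mathfrak{S}_\infty\times\mathfrak{S}_\infty$ acts transitively, so $l^2_k$ is a cyclic two-sided module, but its $\mathfrak{S}_\infty\times\mathfrak{S}_\infty$-commutant is the infinite-dimensional Hecke algebra of the relevant double cosets. The decisive point is that the extra generators $\mathfrak{L}^{reg}_k(\epsilon_A)$ and $\mathfrak{R}^{reg}_k(\epsilon_{A'})$ of corank $\le k$ annihilate all of these intertwiners, collapsing the commutant of $T^{(2)}_k$ to the scalars; carrying out this collapse by hand from (\ref{Left_Reg_Repr})--(\ref{Right_Reg_Repr}) is the hardest single step and the main obstacle. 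Granting the commutation theorem $\mathfrak{L}^{reg}_k(R_\infty)'=\mathfrak{R}^{reg}_k(R_\infty)''$ (established as for ordinary regular representations), the centre of $\mathcal{L}:=\mathfrak{L}^{reg}_k(R_\infty)''$ equals $\mathcal{L}\cap\mathcal{L}'=\mathfrak{L}^{reg}_k(R_\infty)'\cap\mathfrak{R}^{reg}_k(R_\infty)'$, which is the commutant of $T^{(2)}_k$; so (a) yields factoriality of $\mathcal{L}$, which is half of (c). Finally, to fix the type in (c) I would invoke (d): $\mathfrak{L}^{reg}_k|_{\mathfrak{S}_\infty}$ has infinite multiplicity, so $\mathcal{L}$ contains an amplified copy of the $\mathrm{II}_1$ factor generated by $\mathfrak{S}_\infty$, while the nonzero lower-corank partial isometries furnish an infinite system of equivalent orthogonal projections summing to $I$; this exhibits $\mathcal{L}$ as the von Neumann tensor product of that $\mathrm{II}_1$ factor with $B(\mathcal{H})$, $\dim\mathcal{H}=\infty$, hence a $\mathrm{II}_\infty$ factor for $k\ge1$, whereas for $k=0$ the multiplicity is one and one falls back onto the $\mathrm{II}_1$ case of (b).
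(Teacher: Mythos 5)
The paper offers no proof to compare against --- it explicitly leaves this proposition to the reader --- so your proposal must stand on its own. On that basis, parts {\bf b}) and {\bf d}) and both vanishing claims are correct and essentially complete: the identification $l^2_0\cong l^2(\mathfrak{S}_\infty)$ with $\mathfrak{L}^{reg}_0$ vanishing off $\mathfrak{S}_\infty$, the counting argument for $\mathfrak{L}^{reg}_k(\epsilon_{\{j_1\}}\cdots\epsilon_{\{j_l\}})=0$ when $l>k$, and the orbit decomposition of $\mathfrak{L}^{reg}_k|_{\mathfrak{S}_\infty}$ into copies of $l^2(\mathfrak{S}_\infty/\mathfrak{S}_k)$, reduced from the regular representation by a projection of central support $1$ in the commutant, are all sound.

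The genuine gap is part {\bf a}), which you yourself flag as ``the hardest single step and the main obstacle'' and then do not carry out; since your {\bf c}) leans on {\bf a}) (and additionally on an unproved commutation theorem $\mathfrak{L}^{reg}_k(R_\infty)'=\mathfrak{R}^{reg}_k(R_\infty)''$ and an asserted tensor-product splitting), half of the proposition remains unproved. The collapse you need is in fact more elementary than a Hecke-algebra analysis. For a $k$-subset $\mathbb{B}$ the operator $Q_{\mathbb{B}}=\mathfrak{L}^{reg}_k(\epsilon_{\mathbb{B}})$ is exactly the projection onto $\bigl[\,t:\mathbb{N}\setminus\mathrm{ran}\,t=\mathbb{B}\,\bigr]$, and likewise $Q'_{\mathbb{B}'}=\mathfrak{R}^{reg}_k(\epsilon_{\mathbb{B}'})$ projects onto $\bigl[\,t:\mathbb{N}\setminus\mathrm{dom}\,t=\mathbb{B}'\,\bigr]$; the families $\{Q_{\mathbb{B}}\}$ and $\{Q'_{\mathbb{B}'}\}$ are each orthogonal resolutions of the identity, so any $C$ in the commutant of $T_k^{(2)}$ is block-diagonal with respect to the blocks $Q_{\mathbb{B}}Q'_{\mathbb{B}'}l^2_k$. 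Each such block is a torsor under the ICC group of finitary bijections of $\mathbb{N}\setminus\mathbb{B}$, and the permutations fixing $\mathbb{B}$ and $\mathbb{B}'$ pointwise realize on it the two-sided regular representation of that group, whose commutant is the centre of its group von Neumann algebra, i.e.\ the scalars; hence $C$ is a scalar $c_{\mathbb{B},\mathbb{B}'}$ on each block, and transitivity of $\mathfrak{S}_\infty\times\mathfrak{S}_\infty$ on pairs $(\mathbb{B},\mathbb{B}')$ forces all these scalars to coincide. This proves {\bf a}); note also that factoriality in {\bf c}) then follows without any commutation theorem, since $Z\bigl(\mathfrak{L}^{reg}_k(R_\infty)''\bigr)\subseteq\mathfrak{L}^{reg}_k(R_\infty)'\cap\mathfrak{R}^{reg}_k(R_\infty)'=\mathbb{C}I$. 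For the type in {\bf c}) you still need to replace the asserted tensor splitting by an actual argument, e.g.\ that the $Q_{\mathbb{B}}$ form an infinite family of mutually orthogonal, mutually equivalent projections (via the partial isometries $\mathfrak{L}^{reg}_k(s\epsilon_{\mathbb{B}})$) summing to $I$, each with a finite corner, which together with factoriality and the trace coming from {\bf d}) pins down type ${\rm II}_\infty$.
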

We do not use this proposition below and leave its  proof to the reader.
\subsection{The results}
Let $\pi$ be ${\rm II}_1$-factor $\star$-representation of $R_\infty$ in Hilbert space $\mathcal{H}$. Set $\left(\pi(R_\infty)\right)^\prime=\left\{A\in\mathcal{B}(\mathcal{H}): \pi(r)\cdot A=A\cdot\pi(r) \text{ for all } r\in R_\infty\right\}$, $\left(\pi(R_\infty)\right)^{\prime\prime}=\left(\left(\pi(R_\infty)\right)^\prime\right)^\prime$.
Throughout  this paper we will denote by $\left[v_1,v_2,\ldots, \right]$  the closure of the linear span  of  the vectors $v_1,v_2,\ldots\in \mathcal{H}$. Let ${\rm tr}$ be  the unique faithful normal trace on the factor $\left(\pi(R_\infty)\right)^{\prime\prime}$.
Replacing if needed $\pi$ by the quasi-equivalent  representation, we will  suppose below  that there exists the unit  vector $\xi\in \mathcal{H}$ such that
\begin{eqnarray}
\label{bicyclic_1}  \left[\pi\left(R_\infty \right)\xi \right]=\left[ \left(\pi(R_\infty)\right)^\prime\xi\right]=\mathcal{H};\\
 \label{vector_trace} {\rm tr}(A)=(A\xi,\xi)\text{ for all } A\in \left(\pi(R_\infty)\right)^{\prime\prime}.
\end{eqnarray}
The function $f$ on $R_\infty$, defined by $f(r) =\left(\pi(r)\xi,\xi \right)$, satisfy the next conditions:
\begin{itemize}
  \item ({\bf1}) central, that is, $f(rs)=f(sr)$ for all $s,r\in R_\infty$;
  \item ({\bf2}) positive definite, that is, for all $r_1,r_2, \ldots, r_n\in R_\infty$ the matrix $\left[ f\left(  r_i^*r_j \right) \right]$ is Hermitian and non-negative definite;
  \item ({\bf3}) {\it indecomposable}, that is, it cannot be presented as a sum of two linearly independent functions, satisfying ({\bf1}) and ({\bf2});
      \item ({\bf4}) normalized by $f(e)=1$, where $e$ is unit of $R_\infty$.
\end{itemize}
The functions with  such properties are called the {\it finite characters} of semigroup or group.

From now on, $f$ denotes the character on $R_\infty$.
\begin{Th}[\rm Multiplicativity Theorem]\label{Mult_th} Let $f$ be the {\it indecomposable} character on $R_\infty$, and let $r=q_1\cdot q_2\cdots q_k$ be its decomposition into the product of the independent  quasicycles (see (\ref{decomposition_into_product})). Then $f(r)=\prod\limits_{j=1}^k  f\left(q_j\right)$.
\end{Th}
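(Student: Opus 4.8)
The plan is to reduce the statement to the two-factor case and then to prove, for any $a,b\in R_\infty$ with $({\rm supp}\,a)\cap({\rm supp}\,b)=\emptyset$, the identity $f(ab)=f(a)f(b)$. The reduction is an immediate induction on $k$: setting $a=q_1$ and $b=q_2\cdots q_k$, the supports of $a$ and $b$ are disjoint, so once the two-factor case is known we get $f(q_1\cdots q_k)=f(a)f(b)=f(q_1)\,f(q_2\cdots q_k)$, and the inductive hypothesis finishes it. Throughout I work in the GNS data already fixed above: $\pi=\pi_f$, the finite factor $\mathcal{M}=\left(\pi(R_\infty)\right)^{\prime\prime}$ with its trace ${\rm tr}$, and the cyclic trace vector $\xi$, so that $f(r)=(\pi(r)\xi,\xi)$ and ${\rm tr}(A)=(A\xi,\xi)$.

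The key construction is a ``spreading'' of $b$ away from $a$. I choose permutations $\sigma_n\in\mathfrak{S}_\infty$ that fix $({\rm supp}\,a)$ pointwise and push $({\rm supp}\,b)$ to ever larger indices, so that the conjugates $b_n:=\sigma_n b\sigma_n^{-1}$ have pairwise disjoint supports, each disjoint from $({\rm supp}\,a)$, and with $\min({\rm supp}\,b_n)\to\infty$. Two facts are then immediate. First, since $\sigma_n$ fixes $({\rm supp}\,a)$ it commutes with $a$, hence by centrality $f(ab_n)=f\left((a\sigma_n b)\sigma_n^{-1}\right)=f\left(\sigma_n^{-1}a\sigma_n\,b\right)=f(ab)$ for every $n$; in particular ${\rm tr}(\pi(a)\pi(b_n))=f(ab)$ is independent of $n$. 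Second, ${\rm tr}(\pi(b_n))=f(b_n)=f(b)$ for every $n$, again by centrality.

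Now I form the Cesàro averages $M_N=\frac1N\sum_{n=1}^N\pi(b_n)\in\mathcal{M}$, which lie in the unit ball of $\mathcal{M}$. The main step is the limiting-operator claim that every weak operator limit point of $(M_N)$ equals $f(b)\cdot I$. To see it, fix $r\in R_\infty$; because $({\rm supp}\,b_n)$ escapes to infinity while ${\rm supp}\,r$ is finite, the element $b_n$ has support disjoint from $r$ for all but finitely many $n$, so $\pi(r)$ commutes with $\pi(b_n)$ for all large $n$. Hence $[\pi(r),M_N]$ is an $N$-independent finite sum divided by $N$, and, using $\|\pi(b_n)\|\le1$, we get $\|[\pi(r),M_N]\|\to0$. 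Consequently any weak limit point $M$ of the bounded sequence $(M_N)$ lies in $\mathcal{M}$ (a weakly closed algebra) and commutes with every $\pi(r)$, i.e. $M\in\mathcal{M}\cap\left(\pi(R_\infty)\right)^{\prime}=Z(\mathcal{M})$. Since $\pi$ is a factor representation, $Z(\mathcal{M})=\mathbb{C}I$, so $M$ is a scalar; evaluating $(M\xi,\xi)=f(b)$ (each $(\pi(b_n)\xi,\xi)=f(b)$) identifies the scalar as $f(b)$.

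Finally I combine the two observations. We have ${\rm tr}(\pi(a)M_N)=\frac1N\sum_{n=1}^N f(ab_n)=f(ab)$ for all $N$, a constant sequence. Choosing a subnet $M_{N_\alpha}$ converging weakly to a limit point, which by the previous step is $f(b)I$, and writing ${\rm tr}(\pi(a)M_{N_\alpha})=(M_{N_\alpha}\xi,\pi(a)^{*}\xi)$, we obtain $f(ab)=\lim_\alpha{\rm tr}(\pi(a)M_{N_\alpha})=f(b)(\pi(a)\xi,\xi)=f(a)f(b)$, which together with the induction proves the theorem. I expect the only delicate point to be the limiting-operator claim: everything there hinges on the factor property, which converts the averaging input (the commutator estimate produced by spreading the support of $b$ to infinity) into the conclusion that the weak limit is the scalar ${\rm tr}(\pi(b))=f(b)$. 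Notably, this route avoids computing any cross terms $f(b_n^{*}b_m)$, which would themselves be products over disjoint supports and hence circular.
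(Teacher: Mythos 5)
Your proof is correct and follows essentially the same route as the paper: spread one factor off to infinity by conjugating with permutations fixing the other factor, use centrality to keep $f$ constant along the way, and use factoriality to identify the weak limit of the spread-out conjugates with the scalar $f(b)\cdot I$ before pairing against $\pi(a)$. The only differences are cosmetic --- you take Ces\`aro averages rather than the bare sequence, and you make explicit the commutant/factor step that the paper's text glosses over --- so nothing further is needed.
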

\begin{proof}
Since $({\rm supp}\,q_k)\cap \left(\bigcup\limits_{j=1}^{k-1}{\rm supp}\,q_j\right)=\emptyset$, there exist the sequence $\left\{s_n \right\}\subset \mathfrak{S}_\infty$ such that
\begin{eqnarray}\label{supp_infinity}
{\rm supp}\,s_nq_ks_n^{-1}\subset(n,\infty ) \text{ and } s_nl=l \text{ for all } l\in\bigcup\limits_{j=1}^{k-1}{\rm supp}\,q_j.
\end{eqnarray}
Let us prove that a sequence $\left\{\pi\left( s_nq_ks_n^{-1}\right)\xi \right\}\subset\mathcal{H}$ converges in the weak  topology to
a vector $f(q_k)\xi$.
Indeed, using Multiplicativity Theorem and (\ref{supp_infinity}), we have $\lim\limits_{n\to\infty}\left(\pi( s_nq_ks_n^{-1})\xi,\eta \right)=\left(\pi\left(q_k \right)\xi,\xi \right)\cdot\left(\xi,\eta \right)$ for any $\eta=\pi(r)\xi$, where $r\in R_\infty$.     Now we conclude from (\ref{bicyclic_1}) that $\lim\limits_{n\to\infty}\left(\pi( s_nq_ks_n^{-1})\xi,\eta \right)=\left(\pi\left(q_k \right)\xi,\xi \right)\cdot\left(\xi,\eta \right)$ for all $\eta\in\mathcal{H}$. Again using (\ref{bicyclic_1}), we obtain that there exists $\lim\limits_{n\to\infty}\pi( s_nq_ks_n^{-1})=f(q_k)\cdot I_\mathcal{H}$ in the weak operator topology. Therefore, $f(r)= f(s_n rs_n^{-1})$ $ \stackrel{(\ref{supp_infinity})}{=} \lim\limits_{n\to\infty}f\left(s_n q_ks_n^{-1}\prod\limits_{j=1}^{k-1} q_j\right)$ $=f(q_k)\cdot f\left(\prod\limits_{j=1}^{k-1} q_j \right)$
   \end{proof}
   Let us consider any cycle  $c=\left(n_1\,n_2\,\ldots\,,n_k \right)$ of the length $k$. If $\mathbb{A}=\left\{n_{j_1},\ldots, n_{j_l} \right\}\subset {\rm supp}\,c$  and $\mathbb{A}\neq\emptyset$ then, using the relations
   \begin{eqnarray*}
  & c=\left(n_1\;n_k \right)\cdots \left(n_1\;n_{j_i} \right)\cdots\left(n_1\; n_2 \right) \text{ and }\\
 &\epsilon_{\{n_1\}}\cdot  \left(n_1\;n_{j_i} \right)\cdot\epsilon_{\{n_1\}}=\epsilon_{\{n_{j_i}\}}\cdot  \left(n_1\;n_{j_i} \right)\cdot\epsilon_{\{n_{j_i}\}} =\epsilon_{\{n_{j_i}\}}\cdot \epsilon_{\{n_1\}},
   \end{eqnarray*}
    we obtain   $f\left(c\cdot\epsilon_\mathbb{A} \right)= f\left( \left(n_1\;n_k \right)\cdots \left(n_1\;n_{j_1} \right)\cdots\left(n_1\; n_2 \right)\cdot\epsilon_{n_{j_1}}\cdot\epsilon_{\mathbb{A}}\right)$ $= f\left(\epsilon_{n_{j_1}}\cdot \left(n_1\;n_k \right)\right.$ $ \left.\cdots \left(n_1\;n_{j_1} \right)\cdots\left(n_1\; n_2 \right)\cdot\epsilon_{n_{j_1}}\cdot\epsilon_{\mathbb{A}}\right)$ $=f\left( \left(n_1\;n_k \right) \cdots\epsilon_{n_{j_1}}\cdot \left(n_1\;n_{j_1} \right)\cdot\epsilon_{n_{j_1}}\cdots\left(n_1\; n_2 \right)\cdot\epsilon_{\mathbb{A}}\right)$ $=f\left( \left(n_1\;n_k \right) \cdots\left(n_1\;n_{j_1+1} \right)\cdot\epsilon_{n_1}\cdot\epsilon_{n_{j_1}}\cdot\left(n_1\;n_{j_1-1} \right)\cdots\left(n_1\; n_2 \right)\cdot\epsilon_{\mathbb{A}}\right)$ $= f\left(\epsilon_{n_{j_1+1}}\cdot \left(n_1\;n_k \right) \right.$ $\left. \cdots\left(n_1\;n_{j_1+1} \right)\cdot\left(n_1\;n_{j_1-1} \right)\cdots\left(n_1\; n_2 \right)\cdot\epsilon_{\mathbb{A}}\right)$ $=f\left( \widetilde{c}\cdot\epsilon_{n_{j_1+1}}\cdot\epsilon_{\mathbb{A}}\right)$, where $\widetilde{c}=\left(n_1\;\cdots\; n_{j_1-1} \;n_{j_1+1}\;\cdots\;n_k \right)$. Applying these equalities  $k$ times, we obtain $f\left(c\cdot\epsilon_\mathbb{A} \right)=f\left(\epsilon_{{\rm supp}\,c} \right)$.

   Therefore, the next corollary is the supplement to theorem  \ref{Mult_th} and does not need of the proof already.
\begin{Co}\label{Corollary}
There exists $\rho\in[0,1]$ such that
   for any quasicycle $q=c\cdot\epsilon_\mathbb{A}$, where $\mathbb{A}\neq\emptyset$, we have $f\left( q\right)=\rho^{\#({{\rm supp}\,c})}$.
\end{Co}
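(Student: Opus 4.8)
The plan is to combine the identity just established—that $f(c\cdot\epsilon_\mathbb{A})=f(\epsilon_{{\rm supp}\,c})$ for every cycle $c$ and every nonempty $\mathbb{A}\subseteq{\rm supp}\,c$—with the Multiplicativity Theorem and the centrality of $f$. Since that identity already shows the value of $f$ on a quasicycle $q=c\cdot\epsilon_\mathbb{A}$ depends only on ${\rm supp}\,c$, it suffices to evaluate $f(\epsilon_{{\rm supp}\,c})$ and to produce a single constant $\rho$ independent of $c$.

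First I would write $\epsilon_{{\rm supp}\,c}=\prod_{n\in{\rm supp}\,c}\epsilon_{\{n\}}$ as a product of singleton diagonal projections. These have pairwise disjoint supports, hence are independent quasicycles, so Theorem \ref{Mult_th} gives
\begin{eqnarray*}
f(\epsilon_{{\rm supp}\,c})=\prod_{n\in{\rm supp}\,c}f\left(\epsilon_{\{n\}}\right).
\end{eqnarray*}
Next I would equate all the factors: for any $i,j$ the transposition $(i\;j)$ satisfies $(i\;j)\,\epsilon_{\{i\}}\,(i\;j)=\epsilon_{\{j\}}$, and by centrality ({\bf1}) this yields $f\left(\epsilon_{\{j\}}\right)=f\left((i\;j)\,\epsilon_{\{i\}}\,(i\;j)\right)=f\left(\epsilon_{\{i\}}\right)$. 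Hence all singleton values coincide; setting $\rho:=f\left(\epsilon_{\{1\}}\right)$ I obtain $f(\epsilon_{{\rm supp}\,c})=\rho^{\#({\rm supp}\,c)}$, and therefore $f(q)=\rho^{\#({\rm supp}\,c)}$, the degenerate case $q=\epsilon_{\{k\}}$ (where $\#({\rm supp}\,c)=1$) being included.

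To finish I would check $\rho\in[0,1]$. Because $\epsilon_{\{1\}}\in D_\infty$, the operator $\pi\left(\epsilon_{\{1\}}\right)$ is a self-adjoint projection, so $\rho=\left(\pi\left(\epsilon_{\{1\}}\right)\xi,\xi\right)=\left\|\pi\left(\epsilon_{\{1\}}\right)\xi\right\|^2\ge 0$, while $\left\|\pi\left(\epsilon_{\{1\}}\right)\xi\right\|\le\|\xi\|=1$ forces $\rho\le 1$. I expect no real obstacle here: the only substantial work—the reduction $f(c\cdot\epsilon_\mathbb{A})=f(\epsilon_{{\rm supp}\,c})$—has already been carried out in the preceding paragraph by repeated use of centrality and the relation $\epsilon_{\{n_1\}}\,(n_1\;n_j)\,\epsilon_{\{n_1\}}=\epsilon_{\{n_j\}}\,\epsilon_{\{n_1\}}$, so what remains is a routine application of multiplicativity together with conjugation-invariance of central functions, matching the author's remark that the corollary needs no separate proof.
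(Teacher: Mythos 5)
Your proposal is correct and follows exactly the route the paper intends: the preceding computation gives $f(c\cdot\epsilon_\mathbb{A})=f(\epsilon_{{\rm supp}\,c})$, and the remaining steps (factoring $\epsilon_{{\rm supp}\,c}$ into independent singleton quasicycles via Theorem \ref{Mult_th}, equating the factors by centrality, and bounding $\rho=\left(\pi(\epsilon_{\{1\}})\xi,\xi\right)$ in $[0,1]$ since $\pi(\epsilon_{\{1\}})$ is a self-adjoint projection) are precisely the routine verifications the author omits when declaring that the corollary ``does not need of the proof already.''
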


Next statement follows from theorem \ref{Mult_th} and \cite{Thoma}.
\begin{Prop}\label{restriction_to_symmetric}
The restriction of $f$ to the symmetric subgroup $\mathfrak{S}_\infty\subset R_\infty$ is a character of $\mathfrak{S}_\infty$. Denote by $\alpha=\left\{\alpha_1\geq\alpha_2\geq\ldots>0 \right\}$ and $\beta=\left\{\beta_1\geq\alpha_2\geq\ldots>0\right\}$  the corresponding {\it Thoma parameters}.
Let $s$ be the permutation from $\mathfrak{S}_\infty$ and let $s=c_1\cdot c_2 \cdots c_k$ be its cycle decomposition, where the length  of the cycle $c_j$ equal $l(c_j)>1$. Then
\begin{eqnarray*}
f(c_j)=\sum\limits_i \alpha_i^{l(c_j)}+(-1)^{l(c_j)}\sum\limits_i\beta_i^{l(c_j)}\text{ \rm and } f(s)=\prod\limits_{j=1}^k f\left(c_j \right).
\end{eqnarray*}
Further, we denote this restriction by $\chi_{\alpha\beta}$.
\end{Prop}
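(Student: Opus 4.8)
The plan is to verify that the restriction $f|_{\mathfrak{S}_\infty}$ satisfies the four defining properties of a finite character of $\mathfrak{S}_\infty$, and then to read off the explicit form from Thoma's classification. Three of the four properties transfer for free: centrality of $f$ on $R_\infty$ restricts to centrality on $\mathfrak{S}_\infty$; since $s^\star=s^{-1}$ for $s\in\mathfrak{S}_\infty$, the positive-definiteness condition $\left[f(r_i^\star r_j)\right]\geq0$ specializes to the usual group positive-definiteness $\left[f(s_i^{-1}s_j)\right]\geq0$; and $f(e)=1$ is unchanged. Thus the only substantive point is indecomposability of the restriction.

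For indecomposability I would not attempt to argue that ``the restriction of an extreme character stays extreme'' (which fails in general); instead I would exploit Theorem \ref{Mult_th}. A cycle $c=\left(n_1\,n_2\,\ldots\,n_k\right)\in\mathfrak{S}_\infty$ is exactly a quasicycle $c\cdot\epsilon_{\mathbb{A}}$ with $\mathbb{A}=\emptyset$, and the cycle decomposition $s=c_1 c_2\cdots c_k$ is a decomposition into independent quasicycles in the sense of (\ref{decomposition_into_product}). Hence Theorem \ref{Mult_th} yields at once the factorization $f(s)=\prod_{j=1}^k f(c_j)$, that is, $f|_{\mathfrak{S}_\infty}$ is multiplicative on permutations with pairwise disjoint supports.

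The crux is then the classical equivalence, due to Thoma \cite{Thoma}, between multiplicativity and extremality for characters of $\mathfrak{S}_\infty$: a normalized, central, positive-definite function on $\mathfrak{S}_\infty$ that factorizes over disjoint supports is automatically indecomposable, and every such indecomposable character is given by the Thoma formula $f(c)=\sum_i\alpha_i^{l(c)}+(-1)^{l(c)}\sum_i\beta_i^{l(c)}$ for a unique pair of Thoma parameters $\alpha,\beta$. Invoking this result with the multiplicativity just established identifies $f|_{\mathfrak{S}_\infty}$ with one of Thoma's extreme characters $\chi_{\alpha\beta}$ and delivers both the cycle values and the product formula simultaneously.

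The main obstacle is precisely the step I have delegated to \cite{Thoma}: proving that multiplicativity forces extremality (equivalently, that the von Neumann algebra attached to $f|_{\mathfrak{S}_\infty}$ is a factor) and extracting the explicit Thoma parametrization. Since the excerpt permits citing \cite{Thoma}, the remaining work reduces to the bookkeeping of the previous paragraph. The only technical care needed is to confirm that the factorization supplied by Theorem \ref{Mult_th} really covers all of $\mathfrak{S}_\infty$, which it does, as every finitely supported permutation is a product of independent cycles, each a quasicycle with empty $\mathbb{A}$.
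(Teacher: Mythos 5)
Your proposal is correct and follows essentially the same route as the paper, which states only that the proposition ``follows from theorem \ref{Mult_th} and \cite{Thoma}'': restriction gives centrality, positive definiteness and normalization for free, Theorem \ref{Mult_th} gives multiplicativity over disjoint cycles, and Thoma's equivalence of multiplicativity with extremality plus his classification yields the explicit formula. Your write-up merely makes explicit (correctly) the steps the paper leaves implicit.
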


 The main result of this paper is the following theorem
 \begin{Th}[Main Theorem]\label{main_th}
 Under the notations of Corollary \ref{Corollary} and Proposition \ref{restriction_to_symmetric}, we have $\rho\in \alpha\cup 0$. If $\rho=\lambda$, $c= (1\,2\,\ldots, \,k\}$ and  $\mathbb{A}\subset{\rm supp}\,c= \{1,2\ldots, k\}$ then $f\left(c\epsilon_{\mathbb{A}}\right)=\left\{
 \begin{array}{rl}
\lambda^k,&\text{ if } \mathbb{A}\neq \emptyset\\
 \chi_{\alpha\beta}(c),&\text{ if } \mathbb{A}= \emptyset.
 \end{array}\right.$
\end{Th}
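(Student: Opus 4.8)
The splitting into the two cases is essentially bookkeeping: for $\mathbb{A}\neq\emptyset$ the value $f(c\epsilon_{\mathbb{A}})=\rho^{\#\mathrm{supp}\,c}=\rho^{k}$ is Corollary \ref{Corollary} itself with $\lambda:=\rho$, and for $\mathbb{A}=\emptyset$ the equality $f(c)=\chi_{\alpha\beta}(c)$ is just the definition of the restriction in Proposition \ref{restriction_to_symmetric}. So the whole theorem collapses to the single assertion $\rho\in\alpha\cup 0$, and that is what I would concentrate on.

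The plan is to build, out of the transpositions, one self-adjoint operator of the factor whose strictly positive spectrum is exactly the set of row-parameters $\{\alpha_i\}$, and then to force $\rho$ to be an eigenvalue of it. Following Okounkov's limiting-operator idea I would first show that the weak operator limit $T:=\lim_{n\to\infty}\pi((1\,n))$ exists. For $a,b\in\mathfrak{S}_\infty$ one has $(\pi((1\,n))\pi(a)\xi,\pi(b)\xi)=\chi_{\alpha\beta}(b^{-1}(1\,n)a)$, and this scalar is eventually constant in $n$: once $n$ leaves the finite supports of $a$ and $b$, multiplying the fixed permutation $b^{-1}a$ by the transposition $(a^{-1}(1)\,n)$ merely lengthens one cycle by a single far-away point, so the cycle type, hence the character value, no longer depends on $n$. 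Boundedness of $\pi$ and the cyclicity (\ref{bicyclic_1}) then upgrade this to genuine weak convergence; the limit $T$ is self-adjoint with $\|T\|\le 1$. By the same cycle bookkeeping, peeling off one transposition at a time and using normality of $\mathrm{tr}$, one gets $\mathrm{tr}(T^{k})=\chi_{\alpha\beta}(\text{cycle of length }k+1)$ for every $k\ge1$. Feeding in the cycle values of Proposition \ref{restriction_to_symmetric} and solving the resulting (determinate) moment problem identifies the distribution of $T$ relative to $\mathrm{tr}$ as an atomic measure whose atoms lie in $\{\alpha_i\}\cup\{-\beta_j\}\cup\{0\}$, with mass $\alpha_i$ at each point $\alpha_i$; in particular the strictly positive part of the spectrum of $T$ is precisely $\{\alpha_i\}$.

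The heart of the argument is to connect $T$ with the projection $P:=\pi(\epsilon_{\{1\}})$, which has $\mathrm{tr}(P)=f(\epsilon_{\{1\}})=\rho$. Writing $P_n:=\pi(\epsilon_{\{n\}})$ and using the relation $\epsilon_{\{1\}}(1\,n)\epsilon_{\{1\}}=\epsilon_{\{1\}}\epsilon_{\{n\}}$ recorded just before Corollary \ref{Corollary}, I obtain $P\pi((1\,n))P=PP_n$; since $P_n\to\rho I$ weakly (the same argument as in Theorem \ref{Mult_th}, as $\epsilon_{\{n\}}$ is a conjugate of $\epsilon_{\{1\}}$ with support escaping to infinity), separate weak continuity of multiplication by $P$ gives $PTP=\rho P$. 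The decisive step is to promote this to $TP=\rho P$, i.e.\ to annihilate the off-diagonal corner $(I-P)TP$. I would do this by a Hilbert--Schmidt computation inside the finite factor: $\|(I-P)TP\|_2^{2}=\mathrm{tr}(PT^{2}P)-\mathrm{tr}(PTPTP)$. The second term equals $\rho^{3}$ because $PTP=\rho P$ yields $PTPTP=\rho^{2}P$; the first term, $\mathrm{tr}(PT^{2}P)=\mathrm{tr}(T^{2}P)$, is evaluated by the same limiting procedure, the relevant group element being a $3$-cycle through $1$ multiplied by $\epsilon_{\{1\}}$, that is, a quasicycle with non-empty defect set, to which Corollary \ref{Corollary} assigns the value $\rho^{3}$. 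Hence $\|(I-P)TP\|_2=0$, so $TP=PTP=\rho P$. If $\rho>0$ then $P\neq0$ and every vector of $P\mathcal{H}$ is an eigenvector of $T$ for the eigenvalue $\rho\ge0$; by the previous paragraph such a positive eigenvalue must be one of the $\alpha_i$. Together with the trivial case $\rho=0$ this gives $\rho\in\alpha\cup 0$.

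I expect the main obstacle to be exactly the promotion of $PTP=\rho P$ to $TP=\rho P$, rather than the weaker statement that $\rho$ merely lies in the numerical range of $T$: a scalar compression does not by itself produce an eigenvalue, and it is the trace identity $\mathrm{tr}(PT^{2}P)=\mathrm{tr}(PTPTP)=\rho^{3}$ — resting on finiteness of the factor and on the quasicycle evaluation of Corollary \ref{Corollary} — that makes the corner genuinely reducing. A secondary technical point is the repeated interchange of weak limits with the trace in computing $\mathrm{tr}(T^{k})$ and $\mathrm{tr}(T^{2}P)$; here I would rely on normality of $\mathrm{tr}$ and on separate weak continuity of multiplication by the fixed operators $P$ and $\pi((1\,n))$, removing one transposition at a time so that at each stage only a single bounded factor is varied.
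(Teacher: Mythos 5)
Your proposal is correct, and its skeleton coincides with the paper's: both arguments hinge on the Okounkov limiting operator $\mathcal{O}_1=\lim_n\pi((1\,n))$, on the identity $\epsilon_{\{1\}}\cdot(1\,n)\cdot\epsilon_{\{1\}}=\epsilon_{\{1\}}\epsilon_{\{n\}}$ which yields $P\mathcal{O}_1P=\rho P$ for $P=\pi(\epsilon_{\{1\}})$, and on the identification of the strictly positive spectrum of $\mathcal{O}_1$ with the Thoma parameters $\alpha$ (a fact the paper simply quotes from Okounkov as a lemma, and which you re-derive from the moments ${\rm tr}(\mathcal{O}_1^{k})=\chi_{\alpha\beta}\bigl(\text{cycle of length }k+1\bigr)$). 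Where you genuinely diverge is the decisive step $\mathcal{O}_1P=\rho P$: the paper proves outright that $\mathcal{O}_1$ and $P$ commute (Lemma \ref{commutativity-lemma}), by pushing $\epsilon_{\{k\}}$ through $(k\,N)$ to $\epsilon_{\{N\}}$ and invoking centrality of the state, and then uses minimality of $P$ in the resulting abelian algebra to place $P$ under a single positive spectral projection; you instead keep only the compression identity and annihilate the off-diagonal corner via
$\bigl\|(I-P)\mathcal{O}_1P\bigr\|_2^{2}={\rm tr}\bigl(P\mathcal{O}_1^{2}P\bigr)-{\rm tr}\bigl(P\mathcal{O}_1P\mathcal{O}_1P\bigr)=\rho^{3}-\rho^{3}=0$, the first trace being read off from the quasicycle $(1\,m)(1\,n)\epsilon_{\{1\}}$ through Corollary \ref{Corollary}. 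Both routes are sound; your corner computation is a clean, self-contained substitute for the commutativity lemma when only the single relation $\mathcal{O}_1P=\rho P$ is needed, whereas the paper's lemma is stronger --- it gives commutation of every $\mathcal{O}_j$ with every $\pi(\epsilon_{\{k\}})$, which the paper then reuses in its closing evaluation of $f(c\epsilon_{\mathbb{A}})$ for arbitrary $\mathbb{A}$, an evaluation you correctly observe is already subsumed by Corollary \ref{Corollary} once $\rho$ is identified as a Thoma parameter.
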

\section{The proof of the main theorem}
Let us recall first  the important statement from \cite{Ok1, Ok2} that we will use below. 
\begin{Lm}\label{OkUnkov_operator}
 For any $k$ the sequence $\left\{\pi\left(\left(k\;n \right) \right) \right\}_{n=1}^\infty$ converges in the weak operator topology to self-adjoint operator $\mathcal{O}_k\in\pi\left(\mathfrak{S}_\infty \right)^{\prime\prime}\subset\pi\left(R_\infty \right)^{\prime\prime}$.
 \end{Lm}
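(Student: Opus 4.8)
The plan is to prove weak convergence by testing against a total family of vectors, reducing each matrix coefficient to a value of $f$ and then using the centrality of $f$ to show that this value is \emph{eventually constant} in $n$; self-adjointness and the stated membership will then follow automatically. I begin by recording the elementary facts about the operators $u_n:=\pi\bigl((k\;n)\bigr)$. Since a transposition satisfies $(k\;n)^\star=(k\;n)^{-1}=(k\;n)$, we get $u_n^*=\pi\bigl((k\;n)^\star\bigr)=u_n$, so each $u_n$ is a self-adjoint unitary; in particular $\|u_n\|=1$ and $u_n\in\pi(\mathfrak{S}_\infty)''\subseteq\pi(R_\infty)''$. Because the sequence is uniformly bounded, to establish convergence in the weak operator topology it suffices to prove that the scalars $(u_n\eta,\zeta)$ converge for $\eta,\zeta$ running over the total set $\{\pi(a)\xi:a\in R_\infty\}$, whose density in $\mathcal H$ is guaranteed by (\ref{bicyclic_1}).

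For $\eta=\pi(a)\xi$ and $\zeta=\pi(b)\xi$ with $a,b\in R_\infty$, I would rewrite the coefficient using $\pi(b)^*=\pi(b^\star)$:
\begin{eqnarray*}
\bigl(u_n\,\pi(a)\xi,\;\pi(b)\xi\bigr)=\bigl(\pi(b^\star (k\;n) a)\xi,\;\xi\bigr)=f\bigl(b^\star (k\;n) a\bigr).
\end{eqnarray*}
The key step is a conjugation trick. Fix $a,b$ and choose $N_0$ larger than $k$ and than every element of ${\rm supp}\,a\cup{\rm supp}\,b$. For any $n,n'>N_0$ the transposition $(n\;n')$ fixes ${\rm supp}\,a$ and ${\rm supp}\,b$ pointwise, hence commutes with both $a$ and $b^\star$ (both act trivially on the indices $n,n'$), while it conjugates $(k\;n)$ into $(k\;n')$. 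Consequently
\begin{eqnarray*}
(n\;n')\,b^\star (k\;n) a\,(n\;n')^{-1}=b^\star (k\;n') a,
\end{eqnarray*}
so the two elements are conjugate in $R_\infty$ by a permutation. Since $f$ is central (property ({\bf1})) it is invariant under such conjugation, and therefore $f\bigl(b^\star (k\;n)a\bigr)=f\bigl(b^\star (k\;n')a\bigr)$ for all $n,n'>N_0$.

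Thus for every $\eta,\zeta$ in the total set the sequence $(u_n\eta,\zeta)$ is eventually constant, hence convergent; by the uniform bound $\|u_n\|=1$ it then converges for all $\eta,\zeta\in\mathcal H$, and the limits define a sesquilinear form bounded by $\|\eta\|\,\|\zeta\|$. This yields a unique bounded operator $\mathcal{O}_k$ with $u_n\to\mathcal{O}_k$ in the weak operator topology. As a weak limit of the self-adjoint operators $u_n$, the limit $\mathcal{O}_k$ is self-adjoint, and since each $u_n$ lies in the weakly closed algebra $\pi(\mathfrak{S}_\infty)''\subseteq\pi(R_\infty)''$, so does $\mathcal{O}_k$. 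The genuinely substantive step is the conjugation trick producing eventual constancy; the only point demanding care is the standard reduction to a total family of vectors combined with the uniform-boundedness passage from the dense set to all of $\mathcal H$, and neither requires the Multiplicativity Theorem, only centrality of $f$.
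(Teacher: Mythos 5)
Your proof is correct and follows essentially the same route as the paper: the paper's one-line argument likewise observes that the matrix coefficients $\left(\pi((k\;n))\pi(r_1)\xi,\pi(r_2)\xi\right)$ are eventually constant in $n$ and then invokes the cyclicity condition (\ref{bicyclic_1}) to pass to all of $\mathcal{H}$. You merely supply the justification for that eventual constancy (the conjugation by $(n\;n')$ combined with centrality of $f$) which the paper leaves implicit, so there is nothing to object to.
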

 \begin{proof}
 To prove, it is suffices to notice that $\left(\pi\left(\left(k\;n+1 \right) \right)\cdot\pi(r_1)\xi,\pi(r_2)\xi \right)$ $=\left(\pi\left(\left(k\;N \right) \right)\cdot\pi(r_1)\xi,\pi(r_2)\xi \right)$ for all $r_1, r_2\in R_\infty$,  $N>n$ and apply (\ref{bicyclic_1}).
 \end{proof}
 \begin{Lm}
 Let $S(\mathcal{O}_k)$ be the spectrum of operator $\mathcal{O}_k$ and let $\mu$ denotes the spectral measure of operator $\mathcal{O}_k$, corresponding to vector $\xi$. Then the following hold:
 \begin{itemize}
   \item {\bf 1}) the measure $\mu$  is discrete  and its atoms  can only  accumulate to zero;
   \item  {\bf 2}) if $\;\mathcal{O}_k=\sum\limits_{\lambda\in S\left(\mathcal{O}_k \right)} \lambda E_k(\lambda)$ is the spectral decomposition of  $\mathcal{O}_k$ then $\left(E_k(\lambda)\xi,\xi \right)=m(\lambda)\cdot|\lambda|$, where $m(\lambda)\in\mathbb{N}\cup0$;
   \item   {\bf 3}) if $\lambda\in S(\mathcal{O}_k)$ is positive (negative) then there exists some Thoma parameter (see Proposition \ref{restriction_to_symmetric})  such that $\lambda$ $=\alpha_k$ $\left(\lambda=-\beta_k \right)$ and $m(\lambda)=\#\left\{k:\alpha_k=\lambda \right\}\;\;\left(\#\left\{k:-\beta_k=\lambda \right\} \right)$.
 \end{itemize}
 \end{Lm}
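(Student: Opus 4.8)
The plan is to reduce the whole statement to a single moment computation for the self-adjoint operator $\mathcal{O}_k$ in the trace state, and then to reconstruct its spectral measure from its moments. Write $\mu$ for the spectral measure of $\mathcal{O}_k$ attached to $\xi$, so that by the spectral theorem $(\mathcal{O}_k^p\xi,\xi)=\int\lambda^p\,d\mu(\lambda)$ for every $p\ge0$. Because $\xi$ is the trace vector (see (\ref{vector_trace})), these numbers are exactly the traces $\mathrm{tr}(\mathcal{O}_k^p)$, and $\mu$ is a probability measure supported in $[-1,1]$, since the operators $\pi((k\,n))$ are unitary and hence $\|\mathcal{O}_k\|\le1$. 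The heart of the argument is the explicit evaluation of these moments in terms of the Thoma parameters $\alpha,\beta$ of the restriction $\chi_{\alpha\beta}$ furnished by Proposition \ref{restriction_to_symmetric}.

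For the moment computation I would exploit that $\mathcal{O}_k$ is the weak operator limit $\lim_n\pi((k\,n))$ (Lemma \ref{OkUnkov_operator}) together with the fact that one-sided multiplication by a fixed bounded operator is continuous for the weak operator topology. Peeling off one factor at a time, one rewrites $\mathcal{O}_k^p$ as the iterated weak operator limit, taken with the inner indices first, of the products $\pi((k\,n_p))\cdots\pi((k\,n_1))$; along the way the indices $n_1,\ldots,n_p$ may be kept pairwise distinct and distinct from $k$. Pairing with $\xi$ and using (\ref{bicyclic_1}) to pass the limits through the vector state gives
\[
\mathrm{tr}(\mathcal{O}_k^p)=\lim_{n_p}\cdots\lim_{n_1} f\bigl((k\,n_p)\cdots(k\,n_1)\bigr).
\]
A direct check shows that for distinct indices the product $(k\,n_p)\cdots(k\,n_1)$ is the single $(p+1)$-cycle $(k\;n_1\;n_2\;\cdots\;n_p)$, so by Proposition \ref{restriction_to_symmetric} the right-hand side is independent of the chosen indices and equals $\chi_{\alpha\beta}$ of a $(p+1)$-cycle, namely $\sum_i\alpha_i^{p+1}+(-1)^{p}\sum_i\beta_i^{p+1}$ for every $p\ge1$.

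It remains to recognise this moment sequence. Consider the purely atomic measure
\[
\nu=\sum_i \alpha_i\,\delta_{\alpha_i}+\sum_i \beta_i\,\delta_{-\beta_i}+\Bigl(1-\sum_i\alpha_i-\sum_i\beta_i\Bigr)\delta_0 .
\]
A one-line computation shows $\int d\nu=1$ and $\int\lambda^p\,d\nu=\sum_i\alpha_i^{p+1}+(-1)^p\sum_i\beta_i^{p+1}$ for $p\ge1$, so $\mu$ and $\nu$ have the same moments. Since both are finite measures on the compact interval $[-1,1]$, the moment problem is determinate (polynomials are dense in $C[-1,1]$), whence $\mu=\nu$. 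This already gives part {\bf 1}: $\mu$ is discrete with atoms among the points $\alpha_i$, $-\beta_i$ and $0$, and since $\sum_i\alpha_i,\sum_i\beta_i<\infty$ we have $\alpha_i,\beta_i\to0$, so the atoms can accumulate only at $0$.

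Finally I would transport this information to the spectral projections. For a Borel set $B$ one has $\|E_k(B)\xi\|^2=(E_k(B)\xi,\xi)=\mu(B)$, and since $E_k(B)\in\pi(R_\infty)^{\prime\prime}$ while $\xi$ is separating for this finite factor (being its trace vector), $\mu(B)=0$ forces $E_k(B)=0$. Hence the nonzero part of $S(\mathcal{O}_k)$ is exactly $\{\alpha_i\}\cup\{-\beta_i\}$, and for a nonzero eigenvalue $\lambda$ the identity $\mu(\{\lambda\})=|\lambda|\cdot\#\{i:\alpha_i=\lambda\ \text{or}\ -\beta_i=\lambda\}$ yields $(E_k(\lambda)\xi,\xi)=m(\lambda)\,|\lambda|$ with $m(\lambda)$ the stated nonnegative integer, which is parts {\bf 2} and {\bf 3}; the value $(E_k(0)\xi,\xi)=1-\sum_i\alpha_i-\sum_i\beta_i$ is accounted for separately. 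I expect the only delicate point to be the justification of the iterated weak-limit manipulation, with the bookkeeping that keeps the indices distinct so that each product collapses to a single cycle; the measure-theoretic reconstruction is then routine.
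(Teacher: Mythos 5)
Your argument is essentially correct, but note that the paper itself gives no proof of this lemma: it is recalled from Okounkov \cite{Ok1,Ok2}, where these spectral facts are established \emph{without} assuming Thoma's classification and are then used to derive it. Your route runs in the opposite direction: you take the formula for $\chi_{\alpha\beta}$ from Proposition \ref{restriction_to_symmetric} as given, compute the moments $(\mathcal{O}_k^p\xi,\xi)=\lim f\bigl((k\,n_p)\cdots(k\,n_1)\bigr)=\chi_{\alpha\beta}\bigl((p{+}1)\text{-cycle}\bigr)$ by iterated weak limits, and identify $\mu$ with the atomic measure $\sum_i\alpha_i\delta_{\alpha_i}+\sum_i\beta_i\delta_{-\beta_i}+\bigl(1-\sum_i\alpha_i-\sum_i\beta_i\bigr)\delta_0$ via determinacy of the moment problem on $[-1,1]$. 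Within this paper's logic that is legitimate (Proposition \ref{restriction_to_symmetric} already invokes \cite{Thoma}) and yields a short, self-contained verification; what it does not give is Okounkov's independent derivation, so it could not be recycled to reprove Thoma's theorem. The iterated-limit step you flag as delicate is fine: one-sided multiplication by a fixed operator is WOT-continuous, each inner limit is eventually constant by the same computation that proves Lemma \ref{OkUnkov_operator}, and centrality of $f$ makes the value depend only on the cycle type once the indices are distinct, so the iterated limit equals the common value on distinct tuples. Two small points deserve explicit mention. First, your moment value $\sum_i\alpha_i^{p+1}+(-1)^{p}\sum_i\beta_i^{p+1}$ uses the standard Thoma sign $(-1)^{l-1}$ for a cycle of length $l=p+1$, whereas Proposition \ref{restriction_to_symmetric} as printed carries $(-1)^{l}$; the standard sign is the one forced by part {\bf 3} (atoms at $-\beta_i$ of mass $\beta_i$), so you are using the correct convention, but you should state that the printed formula has a sign typo rather than silently switch. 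Second, you rightly observe that part {\bf 2} cannot hold literally at $\lambda=0$ when $\sum_i\alpha_i+\sum_i\beta_i<1$; reading the claim as concerning $\lambda\neq0$ and recording $(E_k(0)\xi,\xi)=1-\sum_i\alpha_i-\sum_i\beta_i$ separately, as you do, is the intended interpretation.
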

 \begin{Lm}\label{commutativity-lemma}
 The operators $\mathcal{O}_j$ and $\pi(\epsilon_{\{k\}})$ mutually commute.
 \end{Lm}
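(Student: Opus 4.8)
The plan is to fix $j,k$ and prove that $\mathcal{O}_j$ commutes with the self-adjoint projection $P:=\pi(\epsilon_{\{k\}})$. Since the vectors $\{\pi(r)\xi:r\in R_\infty\}$ are total in $\mathcal{H}$ by (\ref{bicyclic_1}), it suffices to check the scalar identity $\left(\mathcal{O}_j P\pi(r_1)\xi,\pi(r_2)\xi\right)=\left(P\mathcal{O}_j\pi(r_1)\xi,\pi(r_2)\xi\right)$ for all $r_1,r_2\in R_\infty$. By Lemma \ref{OkUnkov_operator} we have $\mathcal{O}_j=\lim_n\pi\left((j\;n)\right)$ in the weak operator topology, and since left/right multiplication by the fixed bounded operator $P$ is weakly continuous, I can move $\mathcal{O}_j$ inside the limit. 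Using $\left(\pi(x)\xi,\pi(y)\xi\right)=f\left(y^*x\right)$ and $P^*=P$, the left-hand side becomes $\lim_n f\left(r_2^*(j\;n)\epsilon_{\{k\}}r_1\right)$ and the right-hand side becomes $\lim_n f\left(r_2^*\epsilon_{\{k\}}(j\;n)r_1\right)$. Thus the whole lemma reduces to comparing these two scalar sequences.

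When $j\neq k$ this is immediate: for every $n\notin\{j,k\}$ the transposition $(j\;n)$ fixes the point $k$, so $(j\;n)\epsilon_{\{k\}}=\epsilon_{\{k\}}(j\;n)$ already holds in $R_\infty$, and the two coefficients coincide for all large $n$. The substantive case is $j=k$, and this is where I expect the main obstacle: $(k\;n)\epsilon_{\{k\}}$ and $\epsilon_{\{k\}}(k\;n)$ are \emph{distinct} quasicycles (in fact mutually adjoint), so naive term-by-term commutation fails. The device for getting around this is the conjugation identity $(k\;n)\epsilon_{\{k\}}(k\;n)=\epsilon_{\{n\}}$, which rewrites as $(k\;n)\epsilon_{\{k\}}=\epsilon_{\{n\}}(k\;n)$ and $(k\;n)\epsilon_{\{n\}}=\epsilon_{\{k\}}(k\;n)$; its effect is to transfer the ``defect'' $\epsilon$ from the fixed index $k$ onto the far index $n$.

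To finish I would take $n$ larger than every point of $\mathrm{supp}\,r_1\cup\mathrm{supp}\,r_2$ (and $n\neq k$). Then $\epsilon_{\{n\}}$ commutes with both $r_1$ and $r_2^*$, since any element of $R_\infty$ fixing the point $n$ commutes with $\epsilon_{\{n\}}$. Combining these commutations with the centrality of $f$ gives, for all such $n$, the chain
\[
\begin{aligned}
f\left(r_2^*(k\;n)\epsilon_{\{k\}}r_1\right)
&=f\left(r_2^*\epsilon_{\{n\}}(k\;n)r_1\right)
=f\left(\epsilon_{\{n\}}r_2^*(k\;n)r_1\right)\\
&=f\left(r_2^*(k\;n)r_1\epsilon_{\{n\}}\right)
=f\left(r_2^*(k\;n)\epsilon_{\{n\}}r_1\right)
=f\left(r_2^*\epsilon_{\{k\}}(k\;n)r_1\right),
\end{aligned}
\]
where the central equality is the centrality property $f(ab)=f(ba)$ and the remaining ones are the commutation and conjugation relations above. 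Hence the two coefficient sequences are termwise equal for all large $n$, so their limits agree and $\mathcal{O}_k P=P\mathcal{O}_k$. The one point to keep honest is that the limits defining both sides genuinely exist — this is exactly Lemma \ref{OkUnkov_operator} together with weak continuity of multiplication by $P$ — so that termwise equality for large $n$ forces equality of the limits.
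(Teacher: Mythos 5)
Your proof is correct and takes essentially the same route as the paper: both reduce the operator identity to a scalar identity on the total set $\pi(R_\infty)\xi$ via (\ref{bicyclic_1}), and both handle the substantive case $j=k$ with the conjugation identity $(k\;n)\epsilon_{\{k\}}=\epsilon_{\{n\}}(k\;n)$, pushing the defect onto a far index $n$ outside the supports, commuting $\epsilon_{\{n\}}$ past the test elements, and closing the loop with the centrality of $f$. The only cosmetic difference is that you test the bilinear form against two vectors $\pi(r_1)\xi,\pi(r_2)\xi$, whereas the paper tests $\xi$ against $\pi(r)\xi$ and relies on $\xi$ being separating.
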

 \begin{proof}
In the case $k\neq j$ lemma is obvious. By (\ref{bicyclic_1}), it is sufficient to show that
\begin{eqnarray}\label{commutativity}
\left(\pi(\epsilon_{\{k\}})\cdot\mathcal{O}_k\xi,\pi(r)\xi \right)=\left(\mathcal{O}_k\cdot\pi(\epsilon_{\{k\}})\xi,\pi(r)\xi \right)\;\text{ for all  } \; r\in R_\infty.
\end{eqnarray}
Fix the naturale number $N(r)$ such that $r\in R_{N(r)}$. For any sufficiently large number $N$ the next chain of the equalities holds:
\begin{eqnarray*}
&\left(\pi(\epsilon_{\{k\}})\cdot\mathcal{O}_k\xi,\pi(r)\xi \right)\stackrel{\text{Lemma }\ref{OkUnkov_operator}}{=}\lim\limits_{n\to\infty}\left(\pi(\epsilon_{\{k\}})\cdot\pi\left(\left(k\;n\right) \right)\xi,\pi(r)\xi \right)\\
&\stackrel{N>{\rm max}\{k,N(r)\}}{=}\left(\pi(\epsilon_{\{k\}})\cdot\pi\left(\left(k\;N\right) \right)\xi,\pi(r)\xi \right)=\left(\pi(\epsilon_{\{k\}})\cdot\pi\left(\left(k\;N\right) \right)\cdot(\pi(r))^*\xi,\xi \right)\\
&\left(\pi\left(\left(k\;N\right) \right)\cdot\pi(\epsilon_{\{N\}})\cdot(\pi(r))^*\xi,\xi \right)\stackrel{N>N(r)}{=}\left(\pi\left(\left(k\;N\right) \right)\cdot (\pi(r))^*\cdot\pi(\epsilon_{\{N\}})\xi,\xi \right)\\
&=\left(\pi(\epsilon_{\{N\}})\cdot\pi\left(\left(k\;N\right) \right)\cdot (\pi(r))^*\xi,\xi \right)=\left(\pi(\epsilon_{\{N\}})\cdot\pi\left(\left(k\;N\right) \right) \xi,\pi(r)\xi \right)\\
&=\left(\pi\left(\left(k\;N\right) \right)\cdot\pi(\epsilon_{\{k\}}) \xi,\pi(r)\xi \right)=\left(\mathcal{O}_k\cdot\pi(\epsilon_{\{k\}})\xi,\pi(r)\xi \right).
\end{eqnarray*}
The equality (\ref{commutativity}) is proved.
\end{proof}
\begin{Lm}
Let $\mathfrak{A}_j$  be $w^*$-algebra, generated by the operators $\mathcal{O}_j$ and $\pi(\epsilon_{\{j\}})$. Then the following hold:
\begin{itemize}
  \item {\rm i})  $\pi(\epsilon_{\{j\}})$ is a minimal projection in $\mathfrak{A}_j$;
  \item {\rm ii}) if  $\lambda\leq 0$ then $E_j(\lambda)\cdot\pi(\epsilon_{\{j\}})=0$.
\end{itemize}
\end{Lm}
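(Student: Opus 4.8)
The engine behind both assertions is the single operator identity
\begin{eqnarray}\label{key_relation}
\mathcal{O}_j\,\pi(\epsilon_{\{j\}})=\rho\,\pi(\epsilon_{\{j\}}),
\end{eqnarray}
where $\rho=f(\epsilon_{\{j\}})\in[0,1]$ is the number from Corollary \ref{Corollary}. My plan is to establish (\ref{key_relation}) first and then read off i) and ii) as routine spectral bookkeeping against the decomposition $\mathcal{O}_j=\sum_{\lambda\in S(\mathcal{O}_j)}\lambda E_j(\lambda)$. Note at the outset that $\xi$ is separating for $\pi(R_\infty)^{\prime\prime}$: by (\ref{bicyclic_1}) the vector $\xi$ is cyclic for the commutant $\pi(R_\infty)^\prime$, so it is separating for $\left(\pi(R_\infty)^\prime\right)^\prime=\pi(R_\infty)^{\prime\prime}$, and both $\mathcal{O}_j$ (by Lemma \ref{OkUnkov_operator}) and $\pi(\epsilon_{\{j\}})$ lie in this algebra.

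To prove (\ref{key_relation}) I would compress $\mathcal{O}_j$ by $\pi(\epsilon_{\{j\}})$ from both sides. Since $A\mapsto\pi(\epsilon_{\{j\}})\,A\,\pi(\epsilon_{\{j\}})$ is continuous for the weak operator topology, Lemma \ref{OkUnkov_operator} together with the semigroup identity $\epsilon_{\{j\}}\,(j\;n)\,\epsilon_{\{j\}}=\epsilon_{\{j\}}\epsilon_{\{n\}}$ gives
$$\pi(\epsilon_{\{j\}})\,\mathcal{O}_j\,\pi(\epsilon_{\{j\}})=\lim_{n\to\infty}\pi\!\left(\epsilon_{\{j\}}\epsilon_{\{n\}}\right)$$
in the weak operator topology. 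Testing the approximants against the total set $\{\pi(r)\xi:r\in R_\infty\}$ and invoking the Multiplicativity Theorem \ref{Mult_th}, I observe that for every $n\notin{\rm supp}\,r\cup\{j\}$ the factor $\epsilon_{\{n\}}$ is a quasicycle independent of $r^*\epsilon_{\{j\}}$, whence
$$\left(\pi(\epsilon_{\{j\}}\epsilon_{\{n\}})\xi,\pi(r)\xi\right)=f\!\left(r^*\epsilon_{\{j\}}\epsilon_{\{n\}}\right)=\rho\,f\!\left(r^*\epsilon_{\{j\}}\right)=\rho\left(\pi(\epsilon_{\{j\}})\xi,\pi(r)\xi\right).$$
Hence $\pi(\epsilon_{\{j\}})\,\mathcal{O}_j\,\pi(\epsilon_{\{j\}})\,\xi=\rho\,\pi(\epsilon_{\{j\}})\,\xi$, and since $\xi$ is separating the two operators coincide. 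Finally Lemma \ref{commutativity-lemma} collapses $\pi(\epsilon_{\{j\}})\,\mathcal{O}_j\,\pi(\epsilon_{\{j\}})$ to $\mathcal{O}_j\,\pi(\epsilon_{\{j\}})$, which is exactly (\ref{key_relation}).

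Granting (\ref{key_relation}), I would multiply it on the left by $E_j(\lambda)$ and use $E_j(\lambda)\mathcal{O}_j=\lambda E_j(\lambda)$ to obtain $(\lambda-\rho)\,E_j(\lambda)\,\pi(\epsilon_{\{j\}})=0$; thus $E_j(\lambda)\,\pi(\epsilon_{\{j\}})=0$ for every $\lambda\neq\rho$ and $E_j(\rho)\,\pi(\epsilon_{\{j\}})=\pi(\epsilon_{\{j\}})$. Assertion ii) is then immediate: when $\rho>0$ every $\lambda\leq0$ differs from $\rho$, while $\rho=0$ forces $\pi(\epsilon_{\{j\}})\xi=0$, hence $\pi(\epsilon_{\{j\}})=0$ by separability of $\xi$ (so ii) holds trivially and i) is degenerate). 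For i) in the substantive case $\rho>0$, any projection $Q\in\mathfrak{A}_j$ with $Q\leq\pi(\epsilon_{\{j\}})$ satisfies $E_j(\lambda)Q\leq E_j(\lambda)\pi(\epsilon_{\{j\}})=0$ for $\lambda\neq\rho$, so $Q$ lives in the corner $E_j(\rho)\,\mathfrak{A}_j\,E_j(\rho)$, on which $\mathcal{O}_j$ acts as the scalar $\rho$; that corner is therefore generated by the single projection $\pi(\epsilon_{\{j\}})$ and its unit $E_j(\rho)$, and its only subprojections of $\pi(\epsilon_{\{j\}})$ are $0$ and $\pi(\epsilon_{\{j\}})$ itself. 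Thus $\pi(\epsilon_{\{j\}})$ is minimal in $\mathfrak{A}_j$.

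The main obstacle is the double-compression limit: one must legitimately pass the weak-operator limit defining $\mathcal{O}_j$ through the fixed compressions $\pi(\epsilon_{\{j\}})\cdot(-)\cdot\pi(\epsilon_{\{j\}})$, and then identify the resulting vector limit using nothing more than the independence of $\epsilon_{\{n\}}$ supplied by the Multiplicativity Theorem. Everything downstream is formal linear algebra on commuting spectral projections; the one genuinely analytic point is that the separating property of $\xi$ is what upgrades the vector equality $\pi(\epsilon_{\{j\}})\mathcal{O}_j\pi(\epsilon_{\{j\}})\xi=\rho\,\pi(\epsilon_{\{j\}})\xi$ to the operator identity (\ref{key_relation}).
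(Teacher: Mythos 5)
Your proof is correct and follows essentially the same route as the paper: both rest on the identity $\mathcal{O}_j\,\pi(\epsilon_{\{j\}})=\pi(\epsilon_{\{j\}})\,\mathcal{O}_j\,\pi(\epsilon_{\{j\}})=\kappa\,\pi(\epsilon_{\{j\}})$ (your $\rho$ is the paper's $\kappa=f(\epsilon_{\{1\}})$), obtained by compressing the Okounkov limit through the semigroup relation $\epsilon_{\{j\}}(j\;n)\epsilon_{\{j\}}=\epsilon_{\{j\}}\epsilon_{\{n\}}$, with the same spectral bookkeeping afterwards. The only cosmetic difference is that you identify the limit of $\pi(\epsilon_{\{j\}}\epsilon_{\{n\}})$ by testing against $\pi(r)\xi$ via Theorem \ref{Mult_th}, whereas the paper first notes $w^*\text{-}\lim\pi(\epsilon_{\{n\}})=\kappa I$ using the factor property.
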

\begin{proof}
To prove the property {\rm i}), we notice that
\begin{eqnarray}\label{relations_generators}
\pi(\epsilon_{\{j\}})\cdot\pi\left(\left(j\;n \right) \right)\cdot\pi(\epsilon_{\{j\}})=\pi(\epsilon_{\{j\}})\cdot\pi(\epsilon_{\{n\}}).
\end{eqnarray}
Since $\pi$ is ${\rm II}_1$-factor representation, the limit of the sequence  $\left\{ \pi(\epsilon_{\{n\}})\right\}$ exists in the weak operator topology. Namely,
\begin{eqnarray*}
w^*{\text{-}}\lim\limits_{n\to\infty}\pi(\epsilon_{\{n\}})=\kappa\cdot I, \text{ where } \kappa=\left(\pi(\epsilon_{\{1\}})\xi,\xi \right).
\end{eqnarray*}
Hence, applying (\ref{relations_generators}),  lemma \ref{OkUnkov_operator}, lemma \ref{commutativity-lemma} and passing to the limit $n\to\infty$, we obtain
\begin{eqnarray}\label{need_for_zero}
\mathcal{O}_j\cdot\pi(\epsilon_{\{j\}})=\pi(\epsilon_{\{j\}})\cdot\mathcal{O}_j\cdot\pi(\epsilon_{\{j\}})=\kappa\cdot\pi(\epsilon_{\{j\}}).
\end{eqnarray}
Therefore, $\mathcal{O}_j^m\cdot\pi(\epsilon_{\{j\}})=\kappa^m\cdot \pi(\epsilon_{\{j\}})$ for all  $m\in\mathbb{N}$. Property {\rm i}) is proved.

We now come to the proof of  {\rm ii}).

By property {\rm i}) and lemma \ref{commutativity-lemma}, in the case, when $\pi(\epsilon_{\{j\}})\neq 0$, there exists only one spectral projection, for example $E_j(\hat{\lambda})$, such that
\begin{eqnarray*}
E_j(\hat{\lambda})\cdot\pi(\epsilon_{\{j\}})= \pi(\epsilon_{\{j\}}) \text{ and  } E(\lambda^\prime)\cdot\pi(\epsilon_{\{j\}})=0\; \text{ for all } \lambda^\prime\neq\hat{\lambda}.
\end{eqnarray*}
Hence, under the condition $\hat{\lambda}\neq0$, we obtain
\begin{eqnarray*}
&\hat{\lambda}\left(\pi(\epsilon_{\{j\}})\xi,\xi  \right)=\left(\mathcal{O}_j\cdot\pi(\epsilon_{\{j\}})\xi,\xi \right)
\stackrel{\text{Lemma }\ref{OkUnkov_operator}}{=}\lim\limits_{n\to\infty}\left(\pi((j\; n))\cdot \pi(\epsilon_{\{j\}})\xi,\xi\right)\\
&\stackrel{N\neq j}{=}\left(\pi((j\; N))\cdot \pi(\epsilon_{\{j\}})\xi,\xi \right)=\left(\pi(\epsilon_{\{j\}})\cdot\pi((j\; N))\cdot \pi(\epsilon_{\{j\}})\xi,\xi \right)\\
&=\left(\pi(\epsilon_{\{j\}})\cdot \pi(\epsilon_{\{N\}})\xi,\xi \right)\stackrel{\text{Theorem \ref{Mult_th}}}{=}\left(\pi(\epsilon_{\{j\}})\xi,\xi \right)\left( \pi(\epsilon_{\{N\}})\xi,\xi \right)=\left(\pi(\epsilon_{\{j\}})\xi,\xi \right)^2.
\end{eqnarray*}
Since $\pi(\epsilon_{\{j\}})\neq 0$, then, by (\ref{bicyclic_1}), $\left(\pi(\epsilon_{\{j\}})\xi,\xi \right)\neq0$. Therefore,
\begin{eqnarray}\label{nonzero}
\hat{\lambda}=\left(\pi(\epsilon_{\{j\}})\xi,\xi \right)>0.
\end{eqnarray}
If $\hat{\lambda}=0$, i.e. $\pi(\epsilon_{\{j\}})\leq E_k(0)$, then, using (\ref{need_for_zero}), we obtain $\left(\pi(\epsilon_{\{j\}})\xi,\xi \right)=0$.
Thus, by (\ref{bicyclic_1}), $\pi(\epsilon_{\{j\}})=0$.
\end{proof}
The next statement follows from preceding lemma and lemma \ref{commutativity-lemma}.
\begin{Co}\label{co_main}
  If $f\left(\epsilon_{\{k\}} \right)\neq0$ then there exists positive $\lambda\in S\left(\mathcal{O}_k \right)$ such that $\pi(\epsilon_{\{k\}})\leq E_k(\lambda)$ and $f\left(\epsilon_{\{k\}}\right)=\lambda$.
\end{Co}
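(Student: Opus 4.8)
The plan is to read the statement off the preceding lemma; essentially no new estimate is required, and the work consists in assembling the pieces in the right order. First I would convert the scalar hypothesis $f(\epsilon_{\{k\}})\neq 0$ into an operator statement. Since $\pi(\epsilon_{\{k\}})$ is a self-adjoint projection, $f(\epsilon_{\{k\}})=\left(\pi(\epsilon_{\{k\}})\xi,\xi\right)=\|\pi(\epsilon_{\{k\}})\xi\|^2$, so the hypothesis gives $\pi(\epsilon_{\{k\}})\xi\neq 0$, and the cyclicity relation (\ref{bicyclic_1}) then forces $\pi(\epsilon_{\{k\}})\neq 0$. This places us precisely in the nontrivial case $\pi(\epsilon_{\{k\}})\neq 0$ treated in the preceding lemma.

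Next I would exploit the commutation. By Lemma \ref{commutativity-lemma} the projection $\pi(\epsilon_{\{k\}})$ commutes with $\mathcal{O}_k$, hence with every spectral projection $E_k(\lambda)$, so that $\pi(\epsilon_{\{k\}})=\sum_{\lambda\in S(\mathcal{O}_k)}E_k(\lambda)\cdot\pi(\epsilon_{\{k\}})$ is an orthogonal sum of subprojections, each lying in the abelian algebra $\mathfrak{A}_k$. The key step is to invoke part {\rm i}) of the preceding lemma: $\pi(\epsilon_{\{k\}})$ is a \emph{minimal} projection in $\mathfrak{A}_k$, so at most one summand can be nonzero. Writing $\hat\lambda$ for the corresponding spectral value, this yields $E_k(\hat\lambda)\cdot\pi(\epsilon_{\{k\}})=\pi(\epsilon_{\{k\}})$, i.e. $\pi(\epsilon_{\{k\}})\leq E_k(\hat\lambda)$; note $\hat\lambda\in S(\mathcal{O}_k)$ since $E_k(\hat\lambda)\neq 0$.

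Finally I would fix the sign and the value of $\hat\lambda$. Part {\rm ii}) of the preceding lemma states that $E_k(\lambda)\cdot\pi(\epsilon_{\{k\}})=0$ whenever $\lambda\leq 0$; since $\pi(\epsilon_{\{k\}})\neq 0$ is dominated by $E_k(\hat\lambda)$, the value $\hat\lambda$ must be strictly positive. The identification $\hat\lambda=f(\epsilon_{\{k\}})$ is already contained in the computation (\ref{nonzero}): multiplying $\mathcal{O}_k$ into $\pi(\epsilon_{\{k\}})$ and pairing against $\xi$ gives $\hat\lambda\left(\pi(\epsilon_{\{k\}})\xi,\xi\right)=\left(\pi(\epsilon_{\{k\}})\xi,\xi\right)^2$, whence $\hat\lambda=\left(\pi(\epsilon_{\{k\}})\xi,\xi\right)=f(\epsilon_{\{k\}})$. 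Taking $\lambda=\hat\lambda$ finishes the argument.

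I do not expect a genuine obstacle: all the analytic content has been absorbed into Lemma \ref{commutativity-lemma} and the two parts of the preceding lemma. The only point demanding care is the combinatorial use of minimality — that commutation of $\pi(\epsilon_{\{k\}})$ with the spectral resolution of $\mathcal{O}_k$, together with minimality in $\mathfrak{A}_k$, leaves room for exactly one spectral level — and the bookkeeping by which part {\rm ii}) discards the nonpositive portion of the spectrum.
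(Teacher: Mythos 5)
Your proposal is correct and follows exactly the route the paper intends: the paper states that the corollary "follows from the preceding lemma and Lemma \ref{commutativity-lemma}", and your argument simply unpacks that — minimality of $\pi(\epsilon_{\{k\}})$ in $\mathfrak{A}_k$ plus commutation with the spectral resolution singles out one level $\hat\lambda$, part {\rm ii}) forces $\hat\lambda>0$, and the computation culminating in (\ref{nonzero}) identifies $\hat\lambda$ with $f(\epsilon_{\{k\}})$. No gaps; this matches the paper's reasoning.
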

\subsection{The proof of Theorem \ref{main_th}}\label{proof_main_th}
By theorem \ref{Mult_th} and proposition \ref{restriction_to_symmetric}, it is sufficient to find the value of the character $f$ on quasicycle $q=c\cdot\epsilon_\mathbb{A}$, where $c=(1\;2\;\ldots\;k)$, $\mathbb{A}\subset{\rm supp}\,c= \{1,2\ldots, k\}$ and $\mathbb{A}\neq\emptyset$. Without loss of generality we can assume that $\pi(\epsilon_{\{k\}})\neq0$.

Define the map $\vartheta_{\mathbb{A}}: \left\{1,2,\ldots,k \right\}\mapsto \left\{e,\epsilon_{\{1\}} \right\}$, where $e$ is unit of  $R_\infty$, by
\begin{eqnarray}
\vartheta_\mathbb{A}(j)=\left\{
 \begin{array}{rl}
 \epsilon_{\{1\}},&\text{ if } j\in\mathbb{A}\\
 e,&\text{ if }  j\notin\mathbb{A}.
 \end{array}\right.
\end{eqnarray}
Since $c=(1\;k)\cdot(1\;k-1)\cdots (1\;2)$, we have
\begin{eqnarray*}
c\cdot\epsilon_{\mathbb{A}}=\vartheta_{\mathbb{A}}(k)\cdot(1\;k)\cdot\vartheta_{\mathbb{A}}(k-1)\cdot(1\;k-1)\cdots\vartheta_{\mathbb{A}}(2)\cdot(1\;2)\cdot\vartheta_{\mathbb{A}}(1).
\end{eqnarray*}
Therefore, for any collection $\left\{n_2< n_3<\ldots< n_k \right\}\subset\mathbb{N}$, where $n_2>k$,  we obtain
\begin{eqnarray*}
f\left(c\cdot\epsilon_{\mathbb{A}} \right)=\left(\pi\left(\vartheta_{\mathbb{A}}(k)\cdot(1\;k)\cdot\vartheta_{\mathbb{A}}(k-1)\cdot(1\;k-1)\cdots\vartheta_{\mathbb{A}}(2)\cdot(1\;2) \cdot\vartheta_{\mathbb{A}}(1)\right)\xi,\xi\right)\\
=\left(\pi\left(\vartheta_{\mathbb{A}}(k)\cdot(1\;n_k)\cdot\vartheta_{\mathbb{A}}(k-1)\cdot(1\;n_{k-1})\cdots\vartheta_{\mathbb{A}}(2)\cdot(1\;n_2)\cdot\vartheta_{\mathbb{A}}(1) \right)\xi,\xi\right),
\end{eqnarray*}
Passing in series to the limits $n_k\to\infty, n_{k-1}\to\infty,\ldots,n_2\to\infty$, we come to the relation
\begin{eqnarray*}
\begin{split}
f\left(c\cdot\epsilon_{\mathbb{A}} \right)\stackrel{\text{Lemma \ref{OkUnkov_operator}}}{=}\left(\pi\left(\vartheta_{\mathbb{A}}(k) \right)\cdot\mathcal{O}_1\cdot \pi\left(\vartheta_{\mathbb{A}}(k-1) \right)\cdot\mathcal{O}_1\cdots \pi\left(\vartheta_{\mathbb{A}}(2) \right)\cdot\mathcal{O}_1\cdot\pi\left(\vartheta_{\mathbb{A}}(1) \right)\xi,\xi\right)\\
\stackrel{\text{Lemma \ref{commutativity-lemma} }}=\left\{
 \begin{array}{rl}
\left(\pi\left(\epsilon_{\{1\}} \right)\mathcal{O}_1^{k-1}\xi,\xi \right),&\text{ if } \mathbb{A}\neq \emptyset\\
 \left(\mathcal{O}_1^{k-1}\xi,\xi \right),&\text{ if } \mathbb{A}= \emptyset.
 \end{array}\right.
 \stackrel{{\text{Corollary \ref{co_main}}}}{=}\left\{
 \begin{array}{rl}
\lambda^k,&\text{ if } \mathbb{A}\neq \emptyset\\
 \chi_{\alpha\beta}(c),&\text{ if } \mathbb{A}= \emptyset.
 \end{array}\right.
\end{split}
\end{eqnarray*}
Theorem \ref{main_th} is proved.
\section{The realisations of ${\rm II}_1$-factor-representations}\label{Realisation}
Our aim in this section is the construction of  ${\rm II}_1$factor-representations for $R_\infty$.
\subsection{The parameters of the ${\rm II}_1$-factor-representations.}\label{parameters_of_repr}
Let $B(\mathbb{\mathbf{H}})$ denote
the set of all (bounded linear) operators acting on the complex Hilbert space $\mathbf{H}$.
Let ${\rm Tr}$ be the ordinary\footnote{ ${\rm Tr}(\mathfrak{p})$=1 for any  nonzero minimal projection  $\mathfrak{p}\in B(\mathbb{\mathbf{H}})$. } trace on $B(\mathbb{\mathbf{H}})$. Fix the self-adjoint operator $A\in B(\mathbf{H})$ and the {\it minimal} orthogonal projection $\mathbf{q}\in B(\mathbf{H})$ such that $A\cdot\mathbf{q}=\mathbf{q}\cdot A$. Let ${\rm Ker}\, A=\left\{ u\in B(\mathbf{H}):Au=0\right\}$, and let  $({\rm
      Ker} A)^\perp=\mathbf{H}\ominus {\rm Ker}\, A $. Denote by $E(\Delta)$ the spectral projection of operator $A$, corresponding to $\Delta\subset \mathbb{R}$.  Suppose that occur the  next conditions:
\begin{itemize}
  \item {\rm(a)} ${\rm Tr} (|A|)\leq1$,
   \item {\rm(b)} if $\mathbf{q}\neq0$, then   ${\rm Tr}(\mathbf{q})=1$;
  \item {\rm(c)} if ${\rm Tr} (|A|)=1$ then ${\rm Ker}\, A=0$;
    \item {\rm(d)} if ${\rm Tr} (|A|)<1$ then
      $\dim ({\rm Ker}\, A)=\infty$;
       \item {\rm(e)} $\mathbf{q}\cdot E([-1,0])=0$;
\end{itemize}
\subsection{Hilbert space $\mathcal{H}_A^\mathbf{q}$.}\label{hp}
Let $\mathbb{S}=\left\{ 1,2,\ldots, {\rm dim}\,\mathbf{H} \right\}$.
Fix the matrix unit $\left\{ e_{kl}\right\}_{k,l\in\mathbb{S}}\subset B(\mathbf{H})$. Suppose for the convenience that
\begin{eqnarray*}
Ae_{ll}=e_{ll}A \text{ for all } l.
\end{eqnarray*}
Let $\mathbb{S}_{reg}=\left\{ n_1,n_2,\ldots\right\}=\left\{ l:e_{ll}\mathbf{H}\subset {\rm Ker}\, A\right\}$ \label{s_regular}(see ({\rm d})), where $n_k<n_{k+1}$.
 Define
a state $\psi_k$ on $B\left(\mathbf{H} \right)$ as
follows
\begin{eqnarray}\label{psik}
\psi_k\left(b \right)={\rm Tr}\left(b|A| \right)+\left(1-{\rm
Tr}\left(|A| \right)\right) {\rm Tr}\left(b e_{n_kn_k}\right),\;\;b\in B\left(\mathbf{H} \right).
\end{eqnarray}
Let $ _1\psi_k$ denote the product-state on $B\left(\mathbf{H}\right)^{\otimes k}$:
\begin{eqnarray}\label{product_psik}
 _1\psi_k\left(b_1\otimes b_2\otimes \ldots\otimes b_k
 \right)=\prod\limits_{j=1}^k\psi_j\left(b_j \right).
\end{eqnarray}
Now define inner product on $B \left( \mathbf{H}\right)^{\otimes k}$ by
\begin{eqnarray}\label{inner_product_psik}
\left( v,u\right)_k=\,_1\psi_k\left( u^*v \right).
\end{eqnarray}
Let $\mathcal{H}_k$ denote the Hilbert space obtained by completing
$B \left( \mathbf{H}\right)^{\otimes k}$  in above inner product
norm. Now we consider the natural isometrical embedding
\begin{eqnarray*}
v\ni\mathcal{H}_k\mapsto v\otimes {\rm I}\in\mathcal{H}_{k+1},
\end{eqnarray*}
and define Hilbert space $\mathcal{H}^\mathbf{q}_A$ as completing
$\bigcup\limits_{k=1}^\infty \mathcal{H}_k$.
\subsection{The action of $R_\infty$ on $\mathcal{H}^\mathbf{q}_A$. }\label{action}
 First, using the
embedding\\ $a\ni B\left(\mathbf{H} \right)^{\otimes
k}\mapsto a\otimes{\rm I}\in B\left(\mathbf{H}
\right)^{\otimes (k+1)}$, we identify $B\left(\mathbf{H}
\right)^{\otimes k}$ with subalgebra $B\left(\mathbf{H}
\right)^{\otimes
k}\otimes\mathbb{C}\subset B\left(\mathbf{H}
\right)^{\otimes (k+1)}$. Therefore, algebra
$B\left(\mathbf{H}\right)^{\otimes\infty}=\bigcup\limits_{n=1}^\infty
B\left(\mathbf{H} \right)^{\otimes n}$ is well defined.

Now we construct the explicit embedding of $\mathfrak{S}_\infty$ into the unitary subgroup of  $B\left(\mathbf{H}\right)^{\otimes\infty}$. For $a\in B\left(\mathbf{H}\right)$ put $a^{(k)}={\rm I}\otimes\cdots\otimes{\rm I}\otimes\underbrace{a}_{k}\otimes{\rm I}\otimes{\rm I}\cdots$. Let $E_{-}=E([-1,0[)$ and let
\begin{eqnarray*}
U_{E_{-}}^{(k,\,k+1)}=({\rm I}-E_{-})^{(k)}({\rm I}-E_{-})^{(k+1)}+E_{-}^{(k)} ({\rm
I}-E_{-})^{(k+1)}\\
+ ({\rm I}-E_{-})^{(k)}E_{-}^{(k+1)}-E_{-}^{(k)}E_{-}^{(k+1)}.
\end{eqnarray*}
Define the unitary operator $T\left((k\;k+1)\right)\in B\left(\mathbf{H}
\right)^{\otimes\infty}$ as follows
\begin{eqnarray}\label{embeding_T}
T\left((k\;k+1)\right)=U_{E_{-}}^{(k,\, k+1)}\sum\limits_{ij}e_{ij}^{(k)}e_{ji}^{(k+1)}.
\end{eqnarray}
Put $T(\epsilon _{\{1\}})=\mathbf{q}^{(1)}$.
Using the relation $(k\;k+m)=(k+m-1\;k+m)$ $\cdots(k $ $+1\;k+2)$ $\cdot(k\;k+1)\cdot(k+1\;k+2)\cdots(k+m-1\;k+m)$,
 we can to prove that
\begin{eqnarray}\label{any_transposition}
\begin{split}
T\left((k\;k+m)\right)=U_{E_{-}}^{(k,\, k+m)}\sum\limits_{ij}e_{ij}^{(k)}e_{ji}^{(k+m)}, \text{ where }\\
U_{E_{-}}^{(k,\, k+m)}=({\rm I}-E_{-})^{(k)}({\rm I}-E_{-})^{(k+m)}-E_{-}^{(k)}E_{-}^{(k+m)}\\
+\left(({\rm I}-E_{-})^{(k)}E_{-}^{(k+m)}+E_{-}^{(k)}({\rm I}-E_{-})^{(k+m)}\right)\prod\limits_{j=k+1}^{k+m-1}\left({\rm I}-2E_- \right)^{(j)}.
\end{split}
\end{eqnarray}
A easy verification of the standard relations between $\left\{ T((k\;k+1))\right\}_{k\in\mathbb{N}}$ and
$\mathbf{q}^{(1)}$ shows that
$T$ extends by multiplicativity to the $\star$-homomorphism   of $R_\infty$ to $B\left(\mathbf{H}
\right)^{\otimes\infty}$.

Left multiplication in $B\left(\mathbf{H}\right)^{\otimes\infty}$ defines  $\star$-representation $\mathfrak{L}_A$ of   $B\left(\mathbf{H}\right)^{\otimes\infty}$ by bounded operators on  $\mathcal{H}^\mathbf{q}_A$. Put $\Pi_A(r)=\mathfrak{L}_A\left( T(r)\right)$, $r\in R_\infty$. Denote by $\pi_A^{(0)}$ the restriction of $\Pi_A$ to $\left[ \Pi_A\left( R_\infty\right)\xi _0\right]$, where $\xi _0$ is the vector from $\mathcal{H}^\mathbf{q}_A$ corresponding to the unit element of $B\left(\mathbf{H}\right)^{\otimes\infty}$.
\begin{Rem}
If $T\left( \epsilon_{\{1\}} \right)=0$ and $T(s)$ $(s\in\mathfrak{S}_\infty)$ is defined by (\ref{embeding_T}), then $\left\{  \pi_A^{(0)}\left( R_\infty \right) \right\}^{\prime\prime}=\left\{  \pi_A^{(0)}\left( \mathfrak{S}_\infty \right) \right\}^{\prime\prime}$ and the corresponding representation $\pi_A^{(0)}$ is ${\rm II}_1$-factor-representation of $R_\infty$.

\end{Rem}

\begin{Rem}
If $A=\mathbf{p}$, where $\mathbf{p}$ is one-dimensional projection, then $\left(\Pi_\mathbf{p}(s)\xi _0,\xi _0 \right)_{\mathcal{H}^\mathbf{q}_A}=1$ for all  $s\in\mathfrak{S}_\infty$.  Therefore, we obtain two  corresponding representations:
\begin{itemize}
  \item {\bf 1}) $\pi_\mathbf{p}^{(0)}(s)=I$ for all $s\in\mathfrak{S}_\infty$, $\pi_\mathbf{p}^{(0)}(\epsilon_{\{k\}})=I$;
  \item {\bf 2}) $\pi_\mathbf{p}^{(0)}(s)=I$ for all $s\in\mathfrak{S}_\infty$, $\pi_\mathbf{p}^{(0)}(\epsilon_{\{k\}})=0$.
\end{itemize}
If $A=-\mathbf{p}$, then we have the unique representation: $\pi_{-\mathbf{p}}^{(0)}(s)=({\rm sign}\,s)\cdot I$ for all $s\in\mathfrak{S}_\infty$ and $\pi^{(0)}_{-\mathbf{p}}(\epsilon_{\{k\}})=0$.
\end{Rem}
\subsection{The character formula}
Let $f(r)=\left(\pi_A^{(0)}(r)\xi_0,\xi_0 \right)$, $r\in R_\infty$. Here we will find a formula for $f$. Next statement follows from (\ref{any_transposition}), by the ordinary calculation.
\begin{Lm}\label{Realisations_okounkov}
Let $E_-\neq I$. Then  $\lim\limits_{n\to\infty}\pi_A^{(0)}((k\;n))=\mathfrak{L}_A\left( A^{(k)} \right)$ in the weak operator topology.
\end{Lm}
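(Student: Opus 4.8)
The plan is to verify the weak-operator limit on matrix elements against the vectors $\Pi_A(r)\xi_0$ $(r\in R_\infty)$, whose linear span is dense in $\left[\Pi_A(R_\infty)\xi_0\right]$. Writing $\langle x\rangle=\left(\mathfrak{L}_A(x)\xi_0,\xi_0\right)$ for the product vector state and setting $a=T(r_1)$, $b=T(r_2)$ (both supported on the first $L$ tensor factors, by the remark preceding this lemma that $T(r)$ is finitely supported), one has $\left(\pi_A^{(0)}((k\;n))\Pi_A(r_1)\xi_0,\Pi_A(r_2)\xi_0\right)=\langle b^*\,T((k\;n))\,a\rangle$. Thus the statement reduces to computing $\langle b^*T((k\;n))a\rangle$ for $n>L$ and letting $n\to\infty$; by multilinearity of the product state it suffices to treat simple tensors $a=a_1\otimes\cdots\otimes a_L$, $b=b_1\otimes\cdots\otimes b_L$. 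Since each $\pi_A^{(0)}((k\;n))$ is unitary and $\mathfrak{L}_A(A^{(k)})$ is bounded, a uniform-bound argument will upgrade this matrix-element convergence on a total set to weak-operator convergence on all of $\left[\Pi_A(R_\infty)\xi_0\right]$.

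Two structural facts drive the computation. First, $A\,e_{ll}=e_{ll}A$ for every $l$ forces $A$ to be diagonal, $A=\sum_l\lambda_l e_{ll}$, whence $E_-=\sum_{\lambda_l<0}e_{ll}$, $|A|=\sum_l|\lambda_l|e_{ll}$, and $\mathbb{S}_{reg}=\left\{l:\lambda_l=0\right\}$. Second, I substitute the explicit form (\ref{any_transposition}) of $T((k\;n))$ (with $m=n-k$), which splits $b^*T((k\;n))a$ into four families indexed by the four summands of $U_{E_-}^{(k,n)}$. Because operators in distinct tensor slots commute, the vector state factorizes slotwise; as $a$ and $b$ are the identity in slot $n$ (recall $n>L$), the slot-$n$ factor is $\psi_n$ applied to $e_{ji}$ dressed by $E_-$ or $I-E_-$. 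Using (\ref{psik}), $\psi_n(e_{ji})=|\lambda_i|\delta_{ij}+\left(1-{\rm Tr}|A|\right)\delta_{i,n_n}\delta_{j,n_n}$, where $n_n\in\mathbb{S}_{reg}$ is the regular index attached to slot $n$; this factor is nonzero only when $i=j$, apart from a single ``moving'' contribution at $i=j=n_n$.

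Feeding this back, I expect the two diagonal families $(I-E_-)^{(k)}(I-E_-)^{(n)}$ and $-E_-^{(k)}E_-^{(n)}$ to deposit into slot $k$, respectively, the positive part $\sum_{\lambda_l>0}\lambda_l e_{ll}$ and the negative part $\sum_{\lambda_l<0}\lambda_l e_{ll}$ of $A$; summing them reproduces exactly $\psi_k(b_k^*A\,a_k)$, hence $\langle b^*A^{(k)}a\rangle$. The two crossing families carrying $\prod_{j=k+1}^{n-1}(I-2E_-)^{(j)}$ vanish identically for every $n>L$: the sign condition forced at slot $n$ is incompatible with the one forced at slot $k$ (for instance the family $(I-E_-)^{(k)}E_-^{(n)}$ requires $\lambda_j<0$ from slot $n$ but $\lambda_i\ge0$ from slot $k$, together with $i=j$), so each such term is already zero before the limit — in particular the factors $\prod(I-2E_-)^{(j)}$ never need to be evaluated.

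The one genuinely analytic point, which I expect to be the main obstacle, is the surviving moving-index term in the first family, namely $\left(1-{\rm Tr}|A|\right)\psi_k\!\left(b_k^*(I-E_-)e_{n_nn_n}a_k\right)$. I would show this tends to $0$ as $n\to\infty$ by writing $e_{n_nn_n}=|n_n\rangle\langle n_n|$ and using $n_n\to\infty$: inside $\psi_k$ the ${\rm Tr}(\,\cdot\,|A|)$ part is a diagonal matrix entry of a trace-class operator (as $|A|$ is trace class by condition (a)), hence $\to0$, while the ${\rm Tr}(\,\cdot\,e_{n_kn_k})$ part is a product of matrix entries of the bounded operators $a_k,b_k$ taken along a column running off to infinity, which lie in $\ell^2$ and therefore also $\to0$. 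Granting this vanishing, $\lim_{n\to\infty}\langle b^*T((k\;n))a\rangle=\langle b^*A^{(k)}a\rangle$, which is precisely the asserted weak-operator limit. The hypothesis $E_-\neq I$ merely keeps us out of the pure-signature degeneracy and is not otherwise used in the slotwise bookkeeping.
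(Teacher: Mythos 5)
Your computation is correct and is exactly the ``ordinary calculation'' from (\ref{any_transposition}) that the paper invokes without writing out: the paper gives no proof beyond asserting that the lemma follows from that formula, and your slotwise evaluation of the product state (the two diagonal families summing to $\psi_k(b_k^*Aa_k)$, the two crossing families vanishing identically, and the moving term $(1-{\rm Tr}|A|)\psi_k(b_k^*e_{n_nn_n}a_k)\to0$ because $|A|$ is trace class and matrix entries along a column are $\ell^2$) is precisely that intended argument, carried out in full. No gap.
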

It follows from definition of $\pi_A^{(0)}$ that $f$ satisfies to theorem \ref{Mult_th}. Therefore, it is sufficient to find the value $f$ on quasicycle $q=c\epsilon_\mathbb{A}$, where $c=(1\;2\;\ldots\;k)$ and $\mathbb{A}\subset{\rm supp}\,c$.

As in the proof of theorem \ref{main_th}(section \ref{proof_main_th},  lemma \ref{Realisations_okounkov} gives
\begin{eqnarray*}
f\left(c\cdot\epsilon_{\mathbb{A}}\right) =
 \left\{\begin{array}{rl}
{\rm Tr}\left(|A|\cdot\mathbf{q}\cdot A^{k-1} \right),&\text{ if } \mathbb{A}\neq \emptyset\\
{\rm Tr}\left( |A|\cdot A^{k-1} \right) ,&\text{ if } \mathbb{A}= \emptyset.
 \end{array}\right.
\end{eqnarray*}
It follows from p. \ref{parameters_of_repr} that there exists a spectral projection $E(\lambda)$  of operator $A$, where $\lambda>0$, with the property $\mathbf{q}\cdot E(\lambda)=\mathbf{q}$. Hence, using proposition \ref{restriction_to_symmetric}, we obtain
\begin{eqnarray*}
f\left(c\cdot\epsilon_{\mathbb{A}}\right) =
 \left\{\begin{array}{rl}
\lambda^k,&\text{ if } \mathbb{A}\neq \emptyset\\
\chi_{\alpha\beta}(c) ,&\text{ if } \mathbb{A}= \emptyset.
 \end{array}\right.
\end{eqnarray*}
{}
B. Verkin ILTPE of NASU - B.Verkin Institute for Low Temperature Physics and Engineering
of the National Academy of Sciences of Ukraine

 n.nessonov@gmail.com
\end{document}